\documentclass[noamsfonts,a4paper]{amsart}
\usepackage[margin=2cm]{geometry}
\usepackage[bitstream-charter,cal,sfscaled=false]{mathdesign}
\usepackage{hyperref}
\theoremstyle{plain}
\newtheorem{theorem}{Theorem}[section]
\newtheorem{lemma}[theorem]{Lemma}
\newtheorem{corollary}[theorem]{Corollary}
\theoremstyle{definition}

\title{A solution to Haagerup's problem: Normality of weights is commutatively determined}

\author[D. Blecher]{David P. Blecher}
\address{Department of Mathematics\\University of Houston\\Houston, TX 77204-3008, USA}
\email{dpbleche@central.uh.edu}

\author[I. Choi]{Ikhan Choi}
\address{Graduate School of Mathematical Sciences\\The University of Tokyo\\3-8-1 Komaba, Meguro-ku, Tokyo, 153-8914, Japan}
\email{choi@ms.u-tokyo.ac.jp}

\subjclass[2020]{46L51, 46L05, 46L10}
\keywords{von Neumann algebras, normal weights, C$^*$-algebras, lower semi-continuous weights}

\begin{document}

\begin{abstract}
We prove that a weight on a C$^*$-algebra is lower semi-continuous if it is lower semi-continuous on commutative C$^*$-subalgebras.
This can be used to prove a von Neumann algebraic counterpart that a weight on a von Neumann algebra is normal if it is normal on commutative von Neumann subalgebras.
This provides an affirmative solution to Haagerup's second problem (Problem 1.11) on normal weights posed in 1975.
\end{abstract}

\maketitle

\section{Introduction}

Weights on operator algebras may be viewed as a natural setting for ``quantum integrals'', usually with an appropriate lower semi-continuity expressing a non-commutative form of the classical Fatou lemma in integration theory.
For example, $\sigma$-weakly lower semi-continuous weights on von Neumann algebras and norm lower semi-continuous weights on C$^*$-algebras generalize the localizable measures and Radon measures respectively.
In this paper we will establish the following theorems:

\begin{theorem}\label{Theorem: C*}
Let $\varphi$ be a weight on a C$^*$-algebra.
Then, the following are all equivalent.
\begin{enumerate}
\item $\varphi$ preserves norm convergent positive sums on commutative C$^*$-subalgebras.
\item $\varphi$ preserves norm convergent positive sums.
\item $\varphi$ is (norm) lower semi-continuous.
\item $\varphi$ is a pointwise supremum of positive linear functionals.
\end{enumerate}
\end{theorem}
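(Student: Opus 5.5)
The plan is to prove the easy implications directly, quote Combes' theorem for (3)$\Rightarrow$(4), and concentrate the real work in (1)$\Rightarrow$(3). Closing the cycle (1)$\Rightarrow$(3)$\Rightarrow$(4)$\Rightarrow$(3)$\Rightarrow$(2)$\Rightarrow$(1) then gives all the equivalences.

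\emph{Easy implications.}
\begin{itemize}
\item (2)$\Rightarrow$(1) is trivial.
\item (4)$\Rightarrow$(3) holds because a pointwise supremum of norm-continuous functions is norm lower semi-continuous.
\item (3)$\Rightarrow$(2): let $a=\sum_n a_n$ be a norm convergent sum with $a_n\ge 0$, and let $s_N$ be the partial sums. Since $s_N \le a$, monotonicity and additivity give $\sum_{n\le N}\varphi(a_n)=\varphi(s_N)\le \varphi(a)$. Lower semi-continuity gives $\varphi(a)\le\liminf_N \varphi(s_N)$. Hence $\varphi(a)=\sum_n\varphi(a_n)$.
\item (3)$\Rightarrow$(4) is Combes' theorem. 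A lower semi-continuous weight is the supremum of the positive functionals it dominates; one can even use the set $\{\omega\ge 0:\ \omega\le(1-\varepsilon)\varphi\}$, which is upward directed. I would quote this rather than reprove it.
\end{itemize}

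\emph{First step toward (1)$\Rightarrow$(3): a commutative regularity.}
Assume (1). For $a\ge0$ and a decreasing sequence $\varepsilon_k\downarrow 0$, set $\varepsilon_0=\|a\|$. Then
\[ a=\sum_{k\ge1}\bigl((a-\varepsilon_k)_+-(a-\varepsilon_{k-1})_+\bigr) \]
is a norm convergent sum of positive elements of $C^*(a)$. Applying (1) in $C^*(a)$ gives $\varphi(a)=\sup_{\varepsilon>0}\varphi((a-\varepsilon)_+)$.

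\emph{Reduction.} Let $a_n\to a$ in norm with $\varphi(a_n)\le c$ for all $n$. By the previous step it suffices to show $\varphi((a-\varepsilon)_+)\le c$ for each fixed $\varepsilon>0$. Passing to a subsequence, we may assume $\|a_n-a\|<\delta_n$ with $\sum_n\delta_n$ small compared to $\varepsilon$.

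\emph{The main step.}
The task is to bound $\varphi((a-\varepsilon)_+)$ by $\varphi$ of some $a_n$, using only commutative information. The obstacle is that the standard perturbation lemma gives $(a-\delta)_+=x^*a_nx$ with $\|x\|\le1$, which is useless here because $\varphi$ is not invariant under such compressions.

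I would first try a telescoping construction. Inductively build positive elements $b_1,b_2,\dots$ with the following properties:
\begin{itemize}
\item each $b_k$ lies in the commutative algebra generated by $a_k$ (for instance $b_k=f_k(a_k)$ for cut-down functions $f_k$ with $f_k(t)\le t$), so that $\varphi(b_k)\le\varphi(a_k)\le c$;
\item consecutive differences can be written as norm convergent positive sums inside commutative subalgebras;
\item $b_k\to(a-\varepsilon)_+$ in norm, with geometrically controlled errors.
\end{itemize}
Hypothesis (1), applied in these commutative pieces, should then give $\varphi((a-\varepsilon)_+)\le\liminf_k\varphi(b_k)+\text{(small)}\le c$.

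If comparing spectral cut-downs of the non-commuting elements $a_k$ and $a$ resists this, the fallback is different. Localize to the hereditary subalgebra generated by $(a-\varepsilon/2)_+$. There, $a_n$ is invertible modulo small error, so $(a-\varepsilon)_+$ is dominated in order by $(1+\eta)$ times an element of $C^*(a_n)$ plus a small error. Making this order domination precise, with errors that are themselves absorbed by (1), is exactly where I expect the real difficulty of the theorem to lie.
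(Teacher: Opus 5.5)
\textbf{Verdict: genuine gap.} Your routine implications are fine, but the step that carries the content of the theorem, (1)$\Rightarrow$(3), is not proved, and both routes you sketch for it fail at the same point.

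\textbf{What is correct.} The implications (2)$\Rightarrow$(1), (4)$\Rightarrow$(3), (3)$\Rightarrow$(2) and the appeal to Combes are fine. For (3)$\Rightarrow$(2), note that (2) allows arbitrary index families. Either observe that a norm convergent positive family has only countably many nonzero terms, or run your argument with the net of finite partial sums. Your commutative regularity $\varphi(a)=\sup_{\varepsilon>0}\varphi((a-\varepsilon)_+)$ under (1) is correct, and so is the reduction to proving $\varphi((a-\varepsilon)_+)\le c$. That regularity condition is exactly the one in the paper's spectral recovery corollary, but the paper deduces its sufficiency for lower semi-continuity from this very theorem, so it buys nothing on its own.

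\textbf{Why the telescoping plan fails.} The elements $b_k\in C^*(a_k)$ and $b_{k+1}\in C^*(a_{k+1})$ do not commute. So $b_{k+1}-b_k$ is neither positive nor contained in any commutative subalgebra, and (1) cannot be applied to it. Moreover, the inequality $\varphi((a-\varepsilon)_+)\le\liminf_k\varphi(b_k)$ you hope to extract is just lower semi-continuity along $(b_k)$, which is the statement being proved.

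\textbf{Why the fallback fails.} An order domination $(a-\varepsilon)_+\le(1+\eta)b+e$ with $b\le a_n$ and $\|e\|$ small gives nothing. The value $\varphi(e)$ need not be small or even finite, and $e$ lies in no commutative algebra together with $b$ to which (1) could be applied. Hypothesis (1) only relates values of $\varphi$ within a single commutative algebra. Working with positive elements and the order, you have no mechanism to transport that information across non-commuting elements.

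\textbf{The missing idea.} Work in the left ideal $\mathfrak{N}_\varphi$ with the GNS map $\Lambda_\varphi$, and show that the graph of $\Lambda_\varphi$ is norm closed; this implies lower semi-continuity. The bridge from commutative to non-commutative is the paper's lemma on right orthogonal sequences.
\begin{itemize}
\item If $a_n\in\mathfrak{N}_\varphi$ satisfy $a_ma_n^*=0$ for $m\ne n$, then $(a_n^*a_n)(a_m^*a_m)=a_n^*(a_na_m^*)a_m=0$. Hence the $a_n^*a_n$ generate a commutative C$^*$-subalgebra, and (1) applies to $\sum_nr_n^{-1}a_n^*a_n$.
\item The operator Cauchy--Schwarz inequality $|\sum_na_n|^2\le(\sum_nr_n)(\sum_nr_n^{-1}a_n^*a_n)$ then controls $\varphi$ on the genuinely non-commutative element $|\sum_na_n|^2$.
\item Consequently, if $\sum_n(\|a_n\|+\|\Lambda_\varphi(a_n)\|)<\infty$, then $\sum_na_n\in\mathfrak{N}_\varphi$ and $\Lambda_\varphi(\sum_na_n)=\sum_n\Lambda_\varphi(a_n)$.
\end{itemize}

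\textbf{How your telescoping instinct is salvaged.} It works at this level rather than on positive elements. Suppose $a_m\to x$ and $\Lambda_\varphi(a_m)\to\xi$. Right cut-downs by functional calculus produce approximants $c_n$: first for $x=0$ with the $c_n$ right orthogonal, then $c_n=xh_n$ with $h_nh_{n+1}=h_n$. They satisfy $\sum_n(\|x-c_n\|+\|\xi-\Lambda_\varphi(c_n)\|)<\infty$, and the even and odd difference sequences $(c_{2n}-c_{2n-1})_n$ and $(c_{2n-1}-c_{2n-2})_n$ are each right orthogonal. The lemma then gives $x\in\mathfrak{N}_\varphi$ and $\Lambda_\varphi(x)=\xi$. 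Without some such device, your proposal establishes only the routine implications.
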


\begin{theorem}\label{Theorem: W*}
Let $\varphi$ be a weight on a von Neumann algebra.
Then, the following are all equivalent.
\begin{enumerate}
\item $\varphi$ preserves $\sigma$-weakly convergent positive sums on commutative von Neumann subalgebras.
\item $\varphi$ preserves $\sigma$-weakly convergent positive sums.
\item $\varphi$ is $\sigma$-weakly lower semi-continuous.
\item $\varphi$ is a pointwise supremum of normal positive linear functionals.
\item[(2.5)] $\varphi$ is normal, in the sense that it preserves directed suprema.
\end{enumerate}
\end{theorem}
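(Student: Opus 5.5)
The plan is to close the cycle of implications by routing through Theorem~\ref{Theorem: C*}. The implications $(4)\Rightarrow(3)\Rightarrow(2.5)\Rightarrow(2)\Rightarrow(1)$ should be the standard ones, all of the classical type in Haagerup's 1975 work.
\begin{enumerate}
\item[$(4)\Rightarrow(3)$:] A supremum of $\sigma$-weakly continuous functions is $\sigma$-weakly lower semi-continuous.
\item[$(3)\Rightarrow(2.5)$:] An increasing bounded net converges $\sigma$-strongly, hence $\sigma$-weakly, to its supremum. Lower semi-continuity together with monotonicity then gives $\varphi(\sup_i x_i)=\sup_i\varphi(x_i)$.
\item[$(2.5)\Rightarrow(2)$:] Apply (2.5) to the net of finite partial sums.
\item[$(2)\Rightarrow(1)$:] This is trivial.
\end{enumerate}
Haagerup's theorem already gives the equivalence of (2), (2.5), (3) and (4). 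So the entire content is $(1)\Rightarrow(4)$, or equivalently $(1)\Rightarrow(2)$.

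For $(1)\Rightarrow(4)$ I would first pass to the C$^*$-level. Let $C$ be a commutative C$^*$-subalgebra of the von Neumann algebra $M$. Its $\sigma$-weak closure $\overline{C}$ is a commutative von Neumann subalgebra. A norm convergent positive sum in $C$ is in particular $\sigma$-weakly convergent in $\overline{C}$. Hence (1) of Theorem~\ref{Theorem: W*} implies (1) of Theorem~\ref{Theorem: C*}, and therefore $\varphi$ is a pointwise supremum of positive (not necessarily normal) functionals $\omega\le\varphi$.

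It remains to replace each $\omega$ by its normal part. Set
\[
\psi=\sup\{\omega_n : \omega\in M^*_+,\ \omega\le\varphi\},
\]
where $\omega=\omega_n+\omega_s$ is the Takesaki decomposition into normal and singular parts. We need $\omega_n\le\varphi$ for every such $\omega$, and $\psi(x)\ge\varphi(x)$ for every $x\ge0$.

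\textbf{Showing $\omega_n\le\varphi$.} I would use the fact that $\omega_s$ is annihilated by an orthogonal family of projections $(p_j)$ with $\sum_j p_j=1$. Fix $x\ge0$ and work inside a maximal abelian subalgebra $A$ containing the relevant projections, or inside the abelian von Neumann algebra generated by $x$. On $A$, hypothesis (1) says $\varphi|_A$ is completely additive and hence normal. Comparing $\omega|_A\le\varphi|_A$ with the normal functional $\omega_n|_A$ should then yield $\omega_n|_A\le\varphi|_A$ on $A$.

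\textbf{Showing $\psi\ge\varphi$.} Fix $x\ge0$. I would exploit normality of $\varphi$ on the abelian von Neumann algebra $W^*(x)$. Write $x=\sum_k x_k$ as a $\sigma$-weakly convergent sum of commuting positive pieces drawn from spectral projections of $x$, chosen so that the singular parts become negligible on finite subsums. Then $\varphi(x)=\sum_k\varphi(x_k)$, and each $\varphi(x_k)$ is approximated by $\omega_n(x_k)$.

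\textbf{Main obstacle.} The hardest step is this transfer from $\omega$ to $\omega_n$. The difficulty is that $\varphi$ is only known to be normal on each abelian subalgebra separately, whereas the projections $p_j$ that kill $\omega_s$ need not commute with $x$. If direct compression fails, my fallback is to prove complete additivity of $\varphi$ on orthogonal families of projections, which lie in a common abelian subalgebra and so are controlled by (1). I would then mimic Dixmier's argument that completely additive positive functionals are normal, now applied to the lower semi-continuous weight $\varphi$ using the functionals $\omega$ obtained from Theorem~\ref{Theorem: C*}.
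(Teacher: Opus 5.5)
You have a genuine gap. Your reduction to Theorem~\ref{Theorem: C*} and the easy implications match the paper. Your first claim is actually trivial: $\omega_n\le\omega\le\varphi$ because $\omega_s\ge0$.

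The gap is the second claim, $\psi\ge\varphi$. This is equivalent to the whole theorem, and your mechanism for it cannot work, because decomposing $x$ inside $W^*(x)$ gives no control over singular parts. Take $M=B(H)$, $\varphi=\mathrm{Tr}$ and $x=1$. Every multiple $\lambda\omega$ of a singular state satisfies $\lambda\omega\le\mathrm{Tr}$ (it vanishes on trace-class operators) and has $(\lambda\omega)_n=0$. Meanwhile $W^*(1)$ consists of scalars, so there are no spectral pieces to split. That $\psi(1)=\infty$ comes from splitting $1$ into finite-rank projections, which lie outside $W^*(x)$. The whole difficulty is to show that the functionals seen on such non-commuting pieces have negligible singular part. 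Your fallback (complete additivity plus Dixmier) points the right way: the paper proves exactly that norm lower semi-continuity plus complete additivity on projections implies normality. But Dixmier's theorem concerns bounded functionals, and you give no way to handle the unboundedness of $\varphi$.

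The paper supplies three ideas that are missing from your proposal.
(a) If $e$ is a projection with $\varphi(e)<\infty$ (e.g.\ $e=1_{[1/2,\infty)}(a)$ with $a\in(\mathfrak{M}_\varphi)_+$), then $\varphi|_{eMe}$ is a bounded, completely additive, hence normal functional. So any functional dominated by $\varphi$ is normal on $eMe$. The paper approximates the norm-preserving extension of such a functional on $A=\overline{\mathfrak{M}_\varphi}^{\|\cdot\|}$ by its compressions to corners $e_nMe_n$. This shows that every positive functional on $A$ dominated by $\varphi$ extends to a normal functional on $M$. Combes' theorem applied on $A$ (rather than on $M$) then writes $\varphi|_{A_+}$ as a supremum of normal functionals. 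In your language: $\omega_s$ vanishes on $A$ for every $\omega\le\varphi$.
(b) After reducing to the semifinite case via the support of $\overline{A}^{\sigma\text{-weak}}$, a non-commutative Egorov/order-density lemma writes every projection $r$ as an orthogonal sum of projections $p_k\in A$. These need not have finite weight, which is why one works with $A$ rather than $\mathfrak{M}_\varphi$. Complete additivity then gives $\varphi(r)=\sup_K\varphi(p_K)$.
(c) One passes to finite-spectrum elements, and finally to arbitrary $x\in M_+$ by norm lower semi-continuity.

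With (a)--(c) your inequality $\psi\ge\varphi$ would follow.
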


We recall that $\varphi:M_+\to[0,\infty]$ is a weight if it is additive, positively homogeneous (hence monotone), with $\varphi(0)=0$.
Note that the sums in (2) are over arbitrary families, not just countable ones.
The `decreasing directions' from (4) to (1) in both theorems are all obvious or immediate by the obvious calculation.
For example that (3) implies (2) in both theorems follows from the liminf formulation of lower semi-continuity applied to the net of partial sums of the series.
The equivalence between (3) and (4) in Theorem \ref{Theorem: C*} is usually referred to as the {\em  Combes theorem,} originally proved in \cite{MR236721}.
For Theorem \ref{Theorem: W*}, the equivalence of (2), (3), (4), and (2.5), is part of Haagerup's main result in \cite{MR380438}.
Consequently, the main contributions of our paper to these results are the equivalence of (1) and (3) in Theorem \ref{Theorem: C*}, and the equivalence of (1) and (2) in Theorem \ref{Theorem: W*}.
We will use the Combes theorem (from (3) to (4)) and  Haagerup's  equivalence (from (4) to (3)) (see \cite{MR4864224} and \cite{MR4972190} for modern proofs of these) in our Theorems \ref{Theorem: C*} and \ref{Theorem: W*}.

In his first paper \cite{MR380438}, Haagerup posed three problems on normal weights: Problem 1.10, 1.11, and 2.7.
The second problem, numbered  1.11, asks whether normality of weights is commutatively determined.
That is, does normality follow from normality on abelian von Neumann subalgebras?
This is immediate from the equivalence of  (1) and (2.5) in  Theorem \ref{Theorem: W*}.
(The variant of this for `sequential normality' is also true, and will appear in \cite{blecher2026quantumintegralswithappendix}.)
The other two problems have been recently resolved in \cite{MR4972190} and \cite{choi2026counterexamplehaagerupsproblemsubadditive}.
As a result, we may now close the book on Haagerup's three fifty-year-old problems on normal weights.

We will call the equivalence with (1) in the theorems above by the name {\em  commutative detection}.
One of the most striking results we discovered  is that there is a natural proof for commutative detection for von Neumann algebras which uses norm lower semi-continuity of weights.  This means that Haagerup's second problem was mainly essentially C$^*$-algebraic, rather than von Neumann algebraic.
In the last part of Section 2, we will prove that a weight on a von Neumann algebra which is (norm) lower semi-continuous and completely additive on projections satisfies (4) of Theorem \ref{Theorem: W*}.

\subsection*{Preliminaries}

A weight $\varphi$ on a von Neumann algebra $M$ is  \emph{normal} if it preserves directed suprema in the sense that for a bounded increasing net $x_i\in M_+$ we have $\varphi(\sup_ix_i)=\sup_i\varphi(x_i)$.
This property is also equivalent to the properties in Theorem \ref{Theorem: W*}.  Indeed  a $\sigma$-weakly lower semi-continuous weight is clearly normal. Conversely, a normal weight  preserves $\sigma$-weakly convergent positive sums.
This definition of normality does not have an analogue for completely general C$^*$-algebras, but it is quite meaningful when one compares  normality  and sequential normality of weights on von Neumann algebras.
See \cite{blecher2026quantumintegralswithappendix}.

A \emph{semi-cyclic representation} of a C$^*$-algebra $A$ is a representation $\pi:A\to B(H)$ on a Hilbert space $H$ together with a partially defined left $A$-linear map $\Lambda:\mathrm{dom}\,\Lambda\subset A\to H$ with dense range.
For a weight on a C$^*$-algebra, there is a canonically associated semi-cyclic representation also called Gelfand-Naimark-Segal representation (pp.\ 77-79 in \cite{MR236721}).
Indeed the unitary equivalence classes of semi-cyclic representations on von Neumann algebras are in bijective correspondence with the weights (the converse follows by adapting the argument in the proof of  Lemma 1.1 in \cite{MR380438}).
We always let $(\pi_\varphi,\Lambda_\varphi)$ be the semi-cyclic representation of $A$ associated with $\varphi$ for a weight $\varphi$ on a C$^*$-algebra $A$.
We also use the standard notations $\mathfrak{N}_\varphi:=\{a\in A:\varphi(a^*a)<\infty\}$ and $\mathfrak{M}_\varphi:=\mathrm{span}\,\{b^*a\in A:a,b\in\mathfrak{N}_\varphi\}$ without mention.
We will use the facts without mentions that $\mathfrak{N}_\varphi$ is a left ideal and $\mathfrak{M}_\varphi$ is a hereditary subalgebra of $A$.
It is well-known that if $\Lambda_\varphi$ is closed in the sense that the graph is norm closed in $A\times H$, then $\varphi$ is lower semi-continuous (see the argument for Lemma 6.8 (iii) in \cite{blecher2026quantumintegralswithappendix}).
Similarly, a simple variant of this argument shows that for a weight $\varphi$ on a von Neumann algebra $M$, $\varphi$ is normal if the graph $\Lambda_\varphi$ is closed in $M\times H$, where $M$ is endowed with the $\sigma$-weak topology.  One uses bounded increasing nets instead of sequences, and the fact that the graph being closed in $M\times H$ where $H$ has the norm topology,  is equivalent to the same fact where $H$ has the weak topology.  The heart of any proof of `commutative detection' for a weight $\phi$ seems to be establishing the closedness of $\Lambda_\varphi$.

The positive cone of an ordered vector space $E$ will be denoted by $E_+$.

\subsection*{AI disclosure statement}
The proofs were substantially obtained with the aid of GPT-6.0, and checked and rewritten by the authors.  
The authors take full responsibility for the content and correctness of this paper.

\subsection*{Acknowledgement} 
The first author was supported by NSF Grant DMS-2154903, and thanks Louis Labuschagne for discussions and allowing the temporary placement of some of the results here in an Appendix in \cite{blecher2026quantumintegralswithappendix}. 
The second author was supported by JSPS KAKENHI Grant Number JP26KJ0751.

\section{Commutative detection}

\begin{lemma}[Right orthogonal sequences]\label{Lemma: Right orthogonal sequences}
Let $\varphi$ be a weight on a C$^*$-algebra $A$ which preserves norm convergent positive sums on commutative C$^*$-subalgebras.
Let $a_n\in\mathfrak{N}_\varphi$ be a right orthogonal sequence in the sense that $a_ma_n^*=0$ for every $m\ne n$, and such that $\sum_n(\|a_n\|+\|\Lambda_\varphi(a_n)\|)<\infty$.
Then $\sum_na_n\in\mathfrak{N}_\varphi$ and $\Lambda_\varphi(\sum_na_n)=\sum_n\Lambda_\varphi(a_n)$.
\end{lemma}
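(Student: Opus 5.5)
The plan is to show that the partial sums $s_N=\sum_{n\le N}a_n$ converge to $a:=\sum_na_n$ in the graph sense. Norm convergence of $s_N\to a$ is immediate from $\sum\|a_n\|<\infty$. The sequence $\Lambda_\varphi(s_N)=\sum_{n\le N}\Lambda_\varphi(a_n)$ is Cauchy in $H$, hence converges to some $\xi$. Since $\mathfrak N_\varphi$ is a left ideal and $\Lambda_\varphi$ is linear, it is enough to prove two things: $t_N:=\sum_{n>N}a_n\in\mathfrak N_\varphi$, and
\[
\varphi(t_N^*t_N)^{1/2}\le \sum_{n>N}\|\Lambda_\varphi(a_n)\| .
\]
Given these, $a=s_N+t_N\in\mathfrak N_\varphi$, and
\[
\|\Lambda_\varphi(a)-\Lambda_\varphi(s_N)\|=\|\Lambda_\varphi(t_N)\|\to0 ,
\]
so $\Lambda_\varphi(a)=\xi$. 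The whole lemma therefore reduces to a triangle inequality for the tail, namely $\varphi(t^*t)^{1/2}\le\sum_n\varphi(a_n^*a_n)^{1/2}$ for a right orthogonal, norm-summable family. For finite sums this is just the triangle inequality in $H$. The content of the lemma is the passage to the infinite sum, and this is the only place where the commutative hypothesis can be used.

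To make the commutative hypothesis usable, I would move the problem to a commutative algebra with the help of right orthogonality. Write $d=tt^*=\sum_n a_na_n^*$. The cross terms vanish because $a_ma_n^*=0$, and the summands $a_na_n^*$ are mutually orthogonal positive elements, so they lie in the commutative C$^*$-algebra $C^*(\{a_na_n^*\})$. Next I would take increasing functions $f_k$ with $f_k(0)=0$ and $f_k(s)s\to s$ uniformly, and form
\[
e_k=f_k(d)=\sum_n f_k(a_na_n^*)
\]
(an orthogonal, commuting sum). Then
\[
t^*e_kt=\sum_n a_n^*f_k(a_na_n^*)a_n ,
\]
again because $a_m^*f_k(a_na_n^*)=0$ for $m\neq n$. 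Moreover $t^*e_kt=(f_k\cdot\mathrm{id})(t^*t)\to t^*t$ in norm, monotonically, inside the commutative algebra $C^*(t^*t)$. Applying the hypothesis on $C^*(t^*t)$ to the telescoping positive series with partial sums $t^*e_kt$ gives
\[
\varphi(t^*t)=\sup_k\varphi(t^*e_kt).
\]
This reduces the problem to bounding $\varphi(t^*e_kt)$ uniformly in $k$.

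For the uniform bound I would use the operator Cauchy--Schwarz inequality with weights $\lambda_n=\|\Lambda_\varphi(a_n)\|+\varepsilon 2^{-n}$:
\[
\Bigl(\sum_n x_n\Bigr)^*\Bigl(\sum_n x_n\Bigr)\le\Bigl(\sum_n\lambda_n\Bigr)\sum_n\lambda_n^{-1}x_n^*x_n ,\qquad x_n=f_k(a_na_n^*)^{1/2}a_n .
\]
The left-hand side is exactly $t^*e_kt$. The terms on the right satisfy $x_n^*x_n\le\|f_k\|_\infty\,a_n^*a_n$, so
\[
\varphi(x_n^*x_n)\le C_k\,\lambda_n^2 ,
\]
and the right-hand series converges in norm. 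The main obstacle is that these right-hand terms do not commute across $n$, so finite additivity alone controls only the partial sums $\sum_{n\le M}$, not the infinite series. My first attempt would be to choose $f_k$ with $0\le f_k\le1$, so that $C_k\le1$, and then handle the tail of the series as follows. Cut at a finite stage $M$, and bound the remainder $t_M^*e_kt_M$ by repeating the same construction at scale $M$; this should give a recursive estimate whose error is governed by $\sum_{n>M}(\|a_n\|+\|\Lambda_\varphi(a_n)\|)$. Passing $M\to\infty$ and then $\varepsilon\to0$ would yield the displayed triangle inequality. If that recursion does not close, the fallback is to apply the hypothesis instead to the weight $x\mapsto\varphi(t^*xt)$ restricted to the commutative algebra $C^*(\{a_na_n^*\})$, where the sum $\sum_n a_na_n^*$ is a genuinely commuting positive series.
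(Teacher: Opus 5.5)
Your reduction to the tail estimate $\varphi(t_N^*t_N)^{1/2}\le\sum_{n>N}\|\Lambda_\varphi(a_n)\|$ is fine, and so is $\varphi(t^*t)=\sup_k\varphi(t^*e_kt)$. The gap is that you apply right orthogonality to the wrong products, and the steps meant to bound $\varphi(t^*e_kt)$ are false. The condition $a_ma_n^*=0$ says the \emph{right} supports of the $a_n$ are orthogonal. It says nothing about the left supports, so the $a_na_n^*$ need not be orthogonal or even commute. In $M_2$ take $a_1=e_{11}$, $a_2=e_{12}$: then $a_1a_2^*=a_2a_1^*=0$, yet $a_1a_1^*=a_2a_2^*=e_{11}$. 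Consequently:
\begin{itemize}
\item $f_k(tt^*)\ne\sum_nf_k(a_na_n^*)$ in general;
\item $a_m^*f_k(a_na_n^*)$ need not vanish (here $a_2^*f_k(a_1a_1^*)=f_k(1)e_{21}$);
\item for $x_n=f_k(a_na_n^*)^{1/2}a_n$ the cross terms in $(\sum_nx_n)^*(\sum_nx_n)$ survive, so this is not $t^*e_kt$.
\end{itemize}
Your fallback relies on the same false commutativity of $\{a_na_n^*\}$. In addition, the hypothesis on $\varphi$ does not transfer to $x\mapsto\varphi(t^*xt)$, since compressing a commuting series by $t$ destroys commutativity. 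The recursion ``at scale $M$'' is not specified and would meet the same infinite sum at every stage.

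The missing observation runs the other way: $(a_m^*a_m)(a_n^*a_n)=a_m^*(a_ma_n^*)a_n=0$ for $m\ne n$. So the $a_n^*a_n$ are mutually orthogonal and generate a commutative C$^*$-subalgebra. Even your own $x_n^*x_n=(f_k\cdot\mathrm{id})(a_n^*a_n)$ are mutually orthogonal, so the ``main obstacle'' you describe is not there. This makes the detour through $tt^*$ and $e_k$ unnecessary. Take $x_n=a_n$ in the operator Cauchy--Schwarz inequality, $|\sum_na_n|^2\le(\sum_n\lambda_n)\sum_n\lambda_n^{-1}a_n^*a_n$. Then apply the hypothesis to the commuting, norm-convergent series on the right to get $\varphi(|\sum_na_n|^2)\le(\sum_n\lambda_n)\sum_n\lambda_n^{-1}\|\Lambda_\varphi(a_n)\|^2$. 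The same bound for tails gives $\Lambda_\varphi(\sum_na_n)=\sum_n\Lambda_\varphi(a_n)$. This is the paper's proof, with $\lambda_n=\|a_n\|+\|\Lambda_\varphi(a_n)\|$.

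The weights must dominate $\|a_n\|$ for norm convergence. Your choice $\lambda_n=\|\Lambda_\varphi(a_n)\|+\varepsilon2^{-n}$ only gives $\|\lambda_n^{-1}a_n^*a_n\|\le2^n\|a_n\|^2/\varepsilon$, which need not be summable. By contrast, $\lambda_n=\|\Lambda_\varphi(a_n)\|+\varepsilon\|a_n\|$ works, and letting $\varepsilon\to0$ yields your sharp triangle inequality.
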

\begin{proof}
Assume $a_n\ne0$ for all $n$ and let $r_n:=\|a_n\|+\|\Lambda_\varphi(a_n)\|>0$.
Since the sum $\sum_nr_n^{-1}a_n^*a_n$ is norm convergent, and since the sequence $a_n^*a_n$ generate a commutative subalgebra of $A$, by the assumption on $\varphi$ we have
\[\varphi(\sum_nr_n^{-1}a_n^*a_n)=\sum_nr_n^{-1}\varphi(a_n^*a_n)=\sum_nr_n^{-1}\|\Lambda_\varphi(a_n)\|^2\le\sum_n\|\Lambda_\varphi(a_n)\|<\infty.\]
By expansion of products we have a simple  operator Cauchy-Schwarz inequality (see e.g.\ the proof of \cite[Lemma 6.8]{blecher2026quantumintegralswithappendix}) 
\[|\sum_na_n|^2\le(\sum_nr_n) \, (\sum_nr_n^{-1}a_n^*a_n).\]
That $\sum_na_n\in\mathfrak{N}_\varphi$ follows from the last inequality: 
\[\varphi(|\sum_na_n|^2)\le(\sum_nr_n) \, \varphi(\sum_nr_n^{-1}a_n^*a_n)\le(\sum_nr_n)\sum_n\|\Lambda_\varphi(a_n)\|<\infty.\]
Similarly, that $\Lambda_\varphi(\sum_na_n)=\sum_n\Lambda_\varphi(a_n)$ follows in the same way from  
\[\|\Lambda_\varphi(\sum_{n>m}a_n)\|^2=\varphi(|\sum_{n>m}a_n|^2)\le(\sum_{n>m}r_n) \, \varphi(\sum_{n>m}r_n^{-1}a_n^*a_n)\le(\sum_{n>m}r_n)\sum_{n>m}\|\Lambda_\varphi(a_n)\|\to0,\qquad m\to\infty.\qedhere\]
\end{proof}

\begin{theorem}[(1)$\Rightarrow$(3) in Theorem \ref{Theorem: C*}]
Let $\varphi$ be a weight on a C$^*$-algebra.
If $\varphi$ preserves norm convergent positive sums on commutative C$^*$-subalgebras, then it is lower semi-continuous.
\end{theorem}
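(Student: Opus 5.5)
By the preliminaries, it suffices to show that $\Lambda_\varphi$ is closed. That is, if $x_k\in\mathfrak{N}_\varphi$, $x_k\to x$ in norm and $\Lambda_\varphi(x_k)\to\xi$ in $H_\varphi$, then $x\in\mathfrak{N}_\varphi$ and $\Lambda_\varphi(x)=\xi$. Passing to a subsequence, we may assume $\sum_k(\|x_{k+1}-x_k\|+\|\Lambda_\varphi(x_{k+1})-\Lambda_\varphi(x_k)\|)<\infty$. Then $x-x_1=\sum_k d_k$ with $d_k:=x_{k+1}-x_k\in\mathfrak{N}_\varphi$ and $\sum_k(\|d_k\|+\|\Lambda_\varphi(d_k)\|)<\infty$. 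So the task reduces to showing that an absolutely summable (in norm and in $\Lambda_\varphi$-norm) series $\sum_k d_k$ in $\mathfrak{N}_\varphi$ has sum in $\mathfrak{N}_\varphi$ with $\Lambda_\varphi(\sum d_k)=\sum\Lambda_\varphi(d_k)$. Lemma \ref{Lemma: Right orthogonal sequences} gives exactly this when the $d_k$ are right orthogonal, so the plan is to rearrange the series into right orthogonal pieces.

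The main obstacle is producing that orthogonality, since the $d_k$ have no reason to satisfy $d_md_n^*=0$. My first attempt uses functional calculus on $|d_k^*|$ to cut each $d_k$ into pieces supported on disjoint spectral bands, arranged so that pieces from different terms become right orthogonal. Concretely, I would choose continuous functions $f_j$ with disjoint supports on $(0,\infty)$ and write elements as $f_j(xx^*)x$, whose ranges are controlled. Perturbing each $d_k$ by a small error then yields right orthogonal approximants, and the errors must also be $\Lambda_\varphi$-small. Since $\Lambda_\varphi(f(yy^*)y)=\pi_\varphi(f(yy^*))\Lambda_\varphi(y)$ by left $A$-linearity, the $\Lambda_\varphi$-errors are governed by $\pi_\varphi$ and are bounded by the norm errors times $\|\Lambda_\varphi(\cdot)\|$, so they are summable.

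A cleaner route, which I would try first, is to pass to $M_2(A)$ or to $A\otimes c_0$-type constructions. There I would use the weight $\varphi\otimes\mathrm{Tr}$ on the diagonal, which still preserves sums on commutative subalgebras of diagonal type, and place the $d_k$ in distinct rows so that they are automatically right orthogonal. The difficulty is that the hypothesis must hold for $\tilde\varphi$ on all commutative subalgebras of the amplification, not just diagonal ones. To circumvent this, I would avoid amplification and instead apply the lemma to elements $a_n:=e_n d_n$-type products, with the $e_n$ being mutually orthogonal positive contractions in a commutative subalgebra built from $\sum_k |d_k^*|$. Left multiplication by $e_n$ is harmless for $\Lambda_\varphi$ by left linearity, and orthogonality of the $e_n$ gives $a_ma_n^*=0$.

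Finally, from the right orthogonal decomposition I will obtain via Lemma \ref{Lemma: Right orthogonal sequences} that $x-x_1\in\mathfrak{N}_\varphi$ with $\Lambda_\varphi(x-x_1)=\xi-\Lambda_\varphi(x_1)$. Hence $\Lambda_\varphi$ is closed, and lower semi-continuity follows from the cited argument (Lemma 6.8(iii) of \cite{blecher2026quantumintegralswithappendix}). I expect the orthogonalization step to be the genuine heart of the proof; everything else is bookkeeping.
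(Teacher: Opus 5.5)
\textbf{There is a genuine gap: the orthogonalization step, which is the whole content of the proof, is missing.} Your reduction to absolutely summable series is correct, but it only restates closedness of $\Lambda_\varphi$: a graph is closed iff every graph-norm absolutely summable series in the domain converges in graph norm. The specific mechanism you settle on also fails. With $a_n:=e_nd_n$ and $e_me_n=0$ you get $a_m^*a_n=d_m^*e_me_nd_n=0$, which is \emph{left} orthogonality. Lemma~\ref{Lemma: Right orthogonal sequences} instead needs $a_ma_n^*=e_md_md_n^*e_n=0$, and nothing forces this.

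This is not a cosmetic slip. Right orthogonality is what makes the $a_n^*a_n$ mutually orthogonal, so that they lie in a commutative C$^*$-subalgebra, and that is the only place the hypothesis on $\varphi$ enters. For left orthogonal $a_n$ one gets $|\sum a_n|^2=\sum a_n^*a_n$, a sum of non-commuting positives, which is exactly what you are trying to prove. Orthogonality therefore has to be created by multiplying on the \emph{right}. Then left $A$-linearity no longer controls $\Lambda_\varphi$: for $e\in\mathfrak{N}_\varphi$ you only have $\Lambda_\varphi(de)=\pi_\varphi(d)\Lambda_\varphi(e)$, at a cost of $\|d\|\,\|\Lambda_\varphi(e)\|$.

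Right-cutting a \emph{given} series $d_n$ by mutually orthogonal $e_n$ also discards $d_n(1-e_n)$, which is not small in general; for example it fails if the $d_n$ share a right support. The spectral-band idea has the same defect, since bands of $d_k^*d_k$ and of $d_l^*d_l$ are unrelated. Treating the discrepancy as a ``small error'' just produces another arbitrary absolutely summable series, so the argument is circular.

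The paper does not orthogonalize a fixed series. It uses the freedom to \emph{choose new approximants}, in two stages.

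\emph{Closability.} Suppose $a_m\to0$ in norm and $\Lambda_\varphi(a_m)\to\xi$. The paper inductively picks $a_{n+1}$ of tiny norm and right-cuts it: $b_{n+1}=a_{n+1}(1-f_{1/2}(e_n))$ and $c_{n+1}=b_{n+1}g_\delta(|b_{n+1}|^2)$, where $e_n\in\mathfrak{N}_\varphi$ is a right unit for $c_1,\dots,c_n$.
\begin{itemize}
\item The first cut costs at most $\|a_{n+1}\|\,\|\Lambda_\varphi(f_{1/2}(e_n))\|$, which is small precisely because $\|a_{n+1}\|$ is small.
\item The second cut is controlled by the hypothesis applied to the commutative algebra generated by $|b_{n+1}|^2$.
\end{itemize}
This produces right orthogonal $c_n$ with $\sum_n(\|c_n\|+\|\Lambda_\varphi(c_n)-\xi\|)<\infty$. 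Applying the lemma to $(c_{2n}-c_{2n-1})_n$ and $(c_{2n-1}-c_{2n-2})_n$ then forces $\xi=0$.

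\emph{Closedness.} The paper builds $h_n\in\mathfrak{M}_\varphi$ with $h_nh_{n+1}=h_n$, such that $xh_n\to x$ and $\Lambda_\varphi(xh_n)\to\xi$ summably. The differences $x(h_{2n}-h_{2n-1})$ and $x(h_{2n-1}-h_{2n-2})$ are then automatically right orthogonal. Closability is used here to show that $\eta$ is fixed by the left support of $\pi_\varphi(yy^*)$, which is what gives $\Lambda_\varphi(yf_\delta(b_m^*b_m))\approx\eta$.

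Your plan contains neither the small-norm trick nor the right-unit sequence. Without something of this kind it does not go through.
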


\begin{proof}
For $\delta\in(0,1)$, define piecewise linear continuous functions $f_\delta,g_\delta,h_\delta:[0,\infty)\to[0,1]$ by
\[f_\delta(t):=\left\{\begin{alignedat}{2}&\delta^{-1}t&\quad&\text{ if }t\le\delta,\\&1&&\text{ if }t\ge\delta,\end{alignedat}\right.\quad
g_\delta(t):=\left\{\begin{alignedat}{2}&0&\quad&\text{ if }t\le\delta,\\&\delta^{-1}(t-\delta)&&\text{ if }\delta\le t\le 2\delta,\\&1&&\text{ if }t\ge2\delta,\end{alignedat}\right.\quad
h_\delta(t):=\left\{\begin{alignedat}{2}&0&\quad&\text{ if }t\le\delta,\\&(1-\delta)^{-1}(t-\delta)&&\text{ if }\delta\le t\le1,\\&1&&\text{ if }t\ge1,\end{alignedat}\right.\]
Let $\varphi$ be a weight on a C$^*$-algebra $A$ which preserves norm convergent positive sums on commutative C$^*$-subalgebras.
It suffices  (see the end of the introduction) to prove that $\Lambda_\varphi$ is norm closed.

(Closability)
Suppose there is a sequence $a_m\in\mathfrak{N}_\varphi$ such that $\|a_m\|+\|\Lambda_\varphi(a_m)-\xi\|\to0$ as $m\to\infty$ for some $\xi\in H_\varphi$.
It suffices to show $\xi=0$.
We construct a right orthogonal sequence $c_n\in\mathfrak{N}_\varphi$ satisfying a summability condition
\[\sum_{n=1}^\infty(\|c_n\|+\|\Lambda_\varphi(c_n)-\xi\|)<\infty.\]
Assume that such a sequence $c_n$ exists, and let $c_0:=0$.
Then, the sequences $(c_{2n}-c_{2n-1})_n$ and $(c_{2n-1}-c_{2n-2})_n$ are right orthogonal, and
\[\sum_{n=1}^\infty(\|c_{2n}-c_{2n-1}\|+\|\Lambda_\varphi(c_{2n}-c_{2n-1})\|)<\infty,\qquad\sum_{n=1}^\infty(\|c_{2n-1}-c_{2n-2}\|+\|\Lambda_\varphi(c_{2n-1}-c_{2n-2})\|)<\infty.\]
Applying the lemma on right orthogonal sequences (Lemma \ref{Lemma: Right orthogonal sequences}) with the assumption on $\varphi$, it follows that
\[\xi=\lim_{n\to\infty}\Lambda_\varphi(c_{2n})=\Lambda_\varphi(\sum_{n=1}^\infty(c_{2n}-c_{2n-1}))+\Lambda_\varphi(\sum_{n=1}^\infty(c_{2n-1}-c_{2n-2}))=\Lambda_\varphi(0)=0,\]
so the proof is complete.
To obtain the right orthogonality of $c_n$, we simultaneously construct an auxiliary sequence $e_n\in\mathfrak{N}_\varphi$ such that $0\le e_n\le1$ and $c_ie_n=c_i$ for all $i\le n$.

Fix an induction stage $n$ to assume that we have constructed up to $c_n$ and $e_n$.
Let $\varepsilon>0$ be a constant depending on $n$ determined later.
Choose $a_{n+1}\in\mathfrak{N}_\varphi$ in the sequence $a_m$ such that
\[\|a_{n+1}\|<\varepsilon,\qquad\|\Lambda_\varphi(a_{n+1})-\xi\|<\varepsilon.\]
Define
\[b_{n+1}:=a_{n+1}(1-f_{1/2}(e_n))\in\mathfrak{N}_\varphi.\]
Then, $g_{1/2}(e_n)$ annihilates it by right multiplication, and
\begin{align*}
\|b_{n+1}-a_{n+1}\|&=\|a_{n+1}f_{1/2}(e_n)\|<\varepsilon,\\
\|\Lambda_\varphi(b_{n+1})-\Lambda_\varphi(a_{n+1})\|&=\|\Lambda_\varphi(a_{n+1}f_{1/2}(e_n))\|<\varepsilon\|\Lambda_\varphi(f_{1/2}(e_n))\|.
\end{align*}
Define
\[c_{n+1}:=b_{n+1}g_\delta(|b_{n+1}|^2)\in\mathfrak{N}_\varphi,\]
so that $g_{1/2}(e_n)$ still annihilates it but now $f_\delta(|b_{n+1}|^2)$ acts on it as a unit by right multiplication, where $\delta\in(0,1)$ is sufficiently small such that
\begin{align*}
\|c_{n+1}-b_{n+1}\|&=\|b_{n+1}(1-g_\delta(|b_{n+1}|^2))\|=\||b_{n+1}|^2(1-g_\delta(|b_{n+1}|^2))^2\|^{\frac12}<\varepsilon,\\
\|\Lambda_\varphi(c_{n+1})-\Lambda_\varphi(b_{n+1})\|&=(\varphi(|b_{n+1}|^2)-\varphi(|b_{n+1}|^2(1-(1-g_\delta(|b_{n+1}|^2))^2)))^{\frac12}<\varepsilon.
\end{align*}
The latter is possible by the assumption on $\varphi$, because the non-decreasing sequence of continuous functions $t\mapsto t(1-(1-g_{m^{-1}}(t))^2)$ uniformly converges to the the identity $t\mapsto t$ on $t\in[0,\|b_{n+1}\|^2]$ as $m\to\infty$ so that the convergence can be represented as a norm convergent sum by the telescoping.
Then, combining all the estimates above, we get by choosing $\varepsilon:=(5+\|\Lambda_\varphi(f_{1/2}(e_n))\|)^{-1}2^{-(n+1)}$ the summability
\[\|c_{n+1}\|+\|\Lambda_\varphi(c_{n+1})-\xi\|<2^{-(n+1)}.\]

Define
\[e_{n+1}:=f_\delta(|b_{n+1}|^2)+g_{1/2}(e_n)\in\mathfrak{N}_\varphi.\]
Then, the conditions $0\le e_{n+1}\le1$ and $c_ie_{n+1}=c_i$ for $i\le n+1$ can be checked as follows.
\begin{enumerate}
\item Since $|b_{n+1}|^2g_{1/2}(e_n)=b_{n+1}^*a_{n+1}(1-f_{1/2}(e_n))g_{1/2}(e_n)=0$ implies $f_\delta(|b_{n+1}|^2)g_{1/2}(e_n)=0$, we have $e_{n+1}\le1$.
\item Since $c_ie_n=c_i$ for $i\le n$ by the induction hypothesis, the functional calculus with $g_{1/2}(1)=1$ and the orthogonality (1) give
\[c_ig_{1/2}(e_n)=c_i,\qquad c_if_\delta(|b_{n+1}|^2)=0\]
respectively, so $c_ie_{n+1}=c_i$ for $i\le n$.
\item 
Since
\[c_{n+1}f_\delta(|b_{n+1}|^2)=c_{n+1},\qquad c_{n+1}g_{1/2}(e_n)=0\]
by definition of $c_{n+1}$, we have $c_{n+1}e_{n+1}=c_{n+1}$.
\end{enumerate}
The right orthogonality of $c_n$ follows as $c_ic_{n+1}^*=c_ig_{1/2}(e_n)c_{n+1}^*=0$ for $i\le n$.
This completes the construction of sequences $c_n$ and $e_n$.

(Closedness)
Suppose there is a sequence $a_m\in\mathfrak{N}_\varphi$ such that $\|a_m-x\|+\|\Lambda_\varphi(a_m)-\xi\|\to0$ as $m\to\infty$ for some $x\in A$ and $\xi\in H_\varphi$.
We need to prove $x\in\mathfrak{N}_\varphi$ and $\xi=\Lambda_\varphi(x)$.
We construct a sequence $h_n\in\mathfrak{M}_\varphi$ such that $h_nh_{n+1}=h_n$ for all $n$ and we have the summability
\[\sum_{n=1}^\infty(\|x-xh_n\|+\|\xi-\Lambda_\varphi(xh_n)\|)<\infty.\]
Assume that such a sequence $h_n$ exists, and let $c_n:=xh_n$ with $h_0=0$.
Then, we can easily check the sequences $(c_{2n}-c_{2n-1})_n$ and $(c_{2n-1}-c_{2n-2})_n$ are right orthogonal, and
\[\sum_{n=1}^\infty(\|c_{2n}-c_{2n-1}\|+\|\Lambda_\varphi(c_{2n}-c_{2n-1})\|)<\infty,\qquad\sum_{n=1}^\infty(\|c_{2n-1}-c_{2n-2}\|+\|\Lambda_\varphi(c_{2n-1}-c_{2n-2})\|)<\infty.\]
Applying the lemma on right orthogonal sequences (Lemma \ref{Lemma: Right orthogonal sequences}) with the assumption on $\varphi$, we have
\[x=\lim_{n\to\infty}c_{2n}=\sum_{n=1}^\infty(c_{2n}-c_{2n-1})+\sum_{n=1}^\infty(c_{2n-1}-c_{2n-2})\in\mathfrak{N}_\varphi\]
and
\[\xi=\lim_{n\to\infty}\Lambda_\varphi(c_{2n})=\Lambda_\varphi(\sum_{n=1}^\infty(c_{2n}-c_{2n-1}))+\Lambda_\varphi(\sum_{n=1}^\infty(c_{2n-1}-c_{2n-2}))=\Lambda_\varphi(x),\]
so the proof is complete.
To obtain the unit sequence condition $h_nh_{n+1}=h_n$, we simultaneously construct an auxiliary sequence $e_n\in\mathfrak{M}_\varphi$ such that $h_ne_n=h_n$ for all $n$.

Suppose we have constructed up to $h_n$ and $e_n$.
Define
\[y:=x-xe_n\in A,\qquad\eta:=\xi-\Lambda_\varphi(xe_n)\in H_\varphi,\qquad b_m:=a_m-a_me_n\in\mathfrak{N}_\varphi.\]
Then, $\|b_m-y\|+\|\Lambda_\varphi(b_m)-\eta\|\to0$ as $m\to\infty$ by the assumption of $x$ and $\xi$.
If we let $\lambda:=\sup_\delta\pi(f_\delta(yy^*))\in B(H_\varphi)$ be the left support projection of $\pi(yy^*)$ taken by the strong limit in the von Neumann algebra $B(H_\varphi)$, then since in the limit $(\delta,m)\to(0,\infty)$ we have
\begin{align*}
\|(1-f_\delta(yy^*))(a_m-xe_n)\|
&=\|(1-f_\delta(yy^*))(a_m-x)+(1-f_\delta(yy^*))y\|\\
&\le\|a_m-x\|+\|(1-f_\delta(yy^*))^2yy^*\|^{\frac12}\to0,
\end{align*}
and similarly
\begin{align*}
\|\Lambda_\varphi((1-f_\delta(yy^*))(a_m-xe_n))-(\eta-\lambda\eta)\|
&=\|\pi_\varphi(1-f_\delta(yy^*))(\Lambda_\varphi(a_m)-\xi)+\pi_\varphi(1-f_\delta(yy^*))\eta-(\eta-\lambda\eta)\|\\
&\le\|\Lambda_\varphi(a_m)-\xi\|+\|(\lambda-\pi(f_\delta(yy^*)))\eta\|\to0,
\end{align*}
so the closability of $\Lambda_\varphi$ gives $\eta=\lambda\eta$.
This gives a limit
\[\|\eta-\pi(f_\delta(yy^*))\eta\|=\|(\lambda-\pi(f_\delta(yy^*)))\eta\|\to0,\qquad\delta\to0.\]
Let $\varepsilon>0$ be a constant determined later, and fix $\delta>0$ small enough such that
\[\|y-f_\delta(yy^*)y\|<\varepsilon,\qquad\|\eta-\pi(f_\delta(yy^*))\eta\|<\varepsilon.\]
Denoting by $\grave f_\delta(t):=\max\{t,\delta\}^{-1}$ so that $t\grave f_\delta(t)=f_\delta(t)$ for all $t\ge0$, as $m\to\infty$ we get the limit superior
\begin{align*}
\|y-yf_\delta(b_m^*b_m)\|
&=\|(y-f_\delta(yy^*)y)+y(f_\delta(y^*y)-f_\delta(b_m^*b_m))\|\\
&<\varepsilon+\|y\|\|f_\delta(y^*y)-f_\delta(b_m^*b_m)\|\to\varepsilon,
\end{align*}
and similarly but with the boundedness of $\|\Lambda_\varphi(b_m)\|$ we get
\begin{align*}
\|\eta-\Lambda_\varphi(yf_\delta(b_m^*b_m))\|
&=\|(\eta-\pi(f_\delta(yy^*))\eta)+\pi(f_\delta(yy^*))(\eta-\Lambda_\varphi(b_m))+\pi(f_\delta(yy^*))\Lambda_\varphi(b_m)-\Lambda_\varphi(yf_\delta(b_m^*b_m))\|\\
&\le\varepsilon+\|\eta-\Lambda_\varphi(b_m)\|+\|y\grave f_\delta(y^*y)y^*-y\grave f_\delta(b_m^*b_m)b_m^*\|\sup_m\|\Lambda_\varphi(b_m)\|\to\varepsilon.
\end{align*}
It implies that we can find
\[f_{n+1}:=f_\delta(b_m^*b_m)\le\delta^{-1}b_m^*b_m\in\mathfrak{M}_\varphi\]
by taking large $m$ such that
\[\|y-yf_{n+1}\|<2\varepsilon,\qquad\|\eta-\Lambda_\varphi(yf_{n+1})\|<2\varepsilon.\]

Now we use $h_n$.
Define
\begin{align*}
g_{n+1}:&=1-(1-e_n)(1-f_{n+1})(1-e_n)\\
&=2e_n+f_{n+1}-e_n^2-e_nf_{n+1}-f_{n+1}e_n+e_nf_{n+1}e_n\\
&\le2e_n+f_{n+1}+f_{n+1}^2+e_nf_{n+1}e_n\in\mathfrak{M}_\varphi
\end{align*}
so that
\begin{align*}
\|x-xg_{n+1}\|&=\|y(1-f_{n+1})(1-e_n)\|\le\|y-yf_{n+1}\|<2\varepsilon,\\
\|\xi-\Lambda_\varphi(xg_{n+1})\|
&=\|\eta-\Lambda_\varphi(x(g_{n+1}-e_n))\|\\
&=\|\eta-\Lambda_\varphi(x(1-e_n)(f_{n+1}+(1-f_{n+1})e_n))\|\\
&\le\|\eta-\Lambda_\varphi(yf_{n+1})\|+\|y-yf_{n+1}\|\|\Lambda_\varphi(e_n)\|<2\varepsilon(1+\|\Lambda_\varphi(e_n)\|).
\end{align*}
With a new choice of sufficiently small $\delta>0$, define
\[h_{n+1}:=h_\delta(g_{n+1})\in\mathfrak{M}_\varphi\]
such that $0\le g_{n+1}-h_{n+1}\le\delta$ and $0\le(g_{n+1}-h_{n+1})^2\le\delta g_{n+1}$ give
\begin{align*}
\|xg_{n+1}-xh_{n+1}\|&\le\delta\|x\|<\varepsilon,\\
\|\Lambda_\varphi(xg_{n+1})-\Lambda_\varphi(xh_{n+1})\|&\le\delta^{\frac12}\|x\|\varphi(g_{n+1})^{\frac12}<\varepsilon.
\end{align*}
Then, combining all the estimates above, we get by choosing $\varepsilon:=(6+2\|\Lambda_\varphi(e_n)\|)^{-1}2^{-(n+1)}$ the summability
\[\|x-xh_{n+1}\|+\|\xi-\Lambda_\varphi(xh_{n+1})\|<2^{-(n+1)}.\]

Finally, if we define
\[e_{n+1}:=f_\delta(g_{n+1})\le\delta^{-1}g_{n+1}\in\mathfrak{M}_\varphi\]
with $\delta$ in the definition of $h_{n+1}$, then the identity $h_\delta f_\delta=h_\delta$ implies the condition $h_{n+1}e_{n+1}=h_\delta(g_{n+1})f_\delta(g_{n+1})=h_\delta(g_{n+1})=h_{n+1}$ for $e_{n+1}$.
Since the induction hypothesis $h_ne_n=h_n$ implies $h_ng_{n+1}=h_n$ because $1-e_n$ factors $1-g_{n+1}$, by the functional calculus with $h_\delta(1)=1$, the condition $h_nh_{n+1}=h_nh_\delta(g_{n+1})=h_n$ for $h_{n+1}$ is satisfied.
This completes the proof.
\end{proof}

\begin{corollary}[C$^*$-algebraic Haagerup's second problem] \label{cor:commutative-lsc} 
A weight on a C$^*$-algebra that is lower semi-continuous on commutative C$^*$-subalgebras is lower semi-continuous.
\end{corollary}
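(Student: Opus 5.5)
This follows by chaining the theorem just proved with the easy direction (3)$\Rightarrow$(2) of Theorem \ref{Theorem: C*}, applied on each commutative subalgebra. Let $\varphi$ be a weight on a C$^*$-algebra $A$, and suppose that for every commutative C$^*$-subalgebra $C\subset A$ the restriction $\varphi|_{C_+}$ is norm lower semi-continuous. The goal is to verify hypothesis (1) of Theorem \ref{Theorem: C*}: that $\varphi$ preserves norm convergent positive sums on commutative C$^*$-subalgebras. The preceding theorem then gives lower semi-continuity of $\varphi$ on all of $A$.

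To verify (1), fix a commutative C$^*$-subalgebra $C$ and a family $(x_i)_{i\in I}$ in $C_+$ whose sum $x=\sum_i x_i$ converges in norm.

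\begin{enumerate}
\item Since $C$ is norm closed, $x\in C_+$. Every finite partial sum $s_F=\sum_{i\in F}x_i$ also lies in $C_+$.
\item Monotonicity gives $\varphi(x)\ge\varphi(s_F)=\sum_{i\in F}\varphi(x_i)$ for each finite $F$. Taking the supremum over $F$ yields $\varphi(x)\ge\sum_i\varphi(x_i)$.
\item The net $(s_F)_F$ converges to $x$ in norm inside $C$. Lower semi-continuity of $\varphi|_{C_+}$, in its liminf formulation along nets, gives
\[\varphi(x)\le\liminf_F\varphi(s_F)=\sum_i\varphi(x_i).\]
\end{enumerate}

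Hence $\varphi(x)=\sum_i\varphi(x_i)$, so (1) holds, and the preceding theorem ((1)$\Rightarrow$(3)) shows that $\varphi$ is lower semi-continuous.

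The only point needing care is step 3. Lower semi-continuity of $\varphi|_{C_+}$ means the sets $\{c\in C_+:\varphi(c)\le t\}$ are norm closed in $C_+$. This is equivalent to the liminf inequality along arbitrary nets in $C_+$, because lower semi-continuity is a topological property. Moreover, $C$ is closed in $A$, so it makes no difference whether the topology is taken relative to $C_+$ or to $A_+$. Beyond this there is no real obstacle; all the substantive work is contained in the preceding theorem.
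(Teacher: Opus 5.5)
Your proof is correct and follows the paper's route. The paper states this corollary without a separate proof because it is immediate: lower semi-continuity on each commutative C$^*$-subalgebra gives hypothesis (1) via the liminf along the net of finite partial sums (the easy direction noted in the introduction), and then the theorem (1)$\Rightarrow$(3) applies. The one step you left implicit, $x-s_F\ge0$ in your step 2, holds because $x-s_F$ is a norm limit of finite positive sums.
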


We give an additional characterization of lower semi-continuity of weights on C$^*$-algebras.

\begin{corollary}[Spectral recovery and lower semi-continuity]
\label{cor:spectral-recovery-lsc}
Let $\varphi$ be a weight on a C$^*$-algebra $A$.
Then $\varphi$ is norm lower semi-continuous if and only if
\[
\varphi(a)=\sup_{\varepsilon>0}\varphi((a-\varepsilon)_+), 
\qquad a\in A_+,
\]
where $(a-\varepsilon)_+$ is defined by continuous functional
calculus. If $A=M$ is a von Neumann algebra, these conditions
are also equivalent to the spectral finite-spectrum recovery
property $(FS_c)$:
\[
\varphi(a)=\sup\{\varphi(s):0\le s\le a,\ 
s\in W^*(1,a),\ s\text{ has finite spectrum}\},
\qquad a\in M_+.
\]
\end{corollary}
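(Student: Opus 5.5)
The plan is to run everything through the implication (1)$\Rightarrow$(3) of Theorem \ref{Theorem: C*} just proved, checking condition (1) on commutative C$^*$-subalgebras.

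\emph{Forward direction.} If $\varphi$ is lower semi-continuous, monotonicity gives $\varphi((a-\varepsilon)_+)\le\varphi(a)$. Since $(a-\varepsilon)_+\to a$ in norm as $\varepsilon\to0$, lower semi-continuity gives
\[\varphi(a)\le\liminf_{\varepsilon\to0}\varphi((a-\varepsilon)_+),\]
which yields the spectral recovery formula.

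\emph{Converse.} Assume spectral recovery and let $C\subset A$ be commutative, $C\cong C_0(X)$. Suppose $a=\sum_i a_i$ is a norm convergent sum with $a_i\in C_+$. The inequality $\varphi(a)\ge\sum_i\varphi(a_i)$ follows from additivity on finite partial sums and monotonicity, so only the reverse inequality is needed. Fix $\varepsilon>0$ and choose a finite set $F$ with $\|a-s_F\|<\varepsilon$, where $s_F:=\sum_{i\in F}a_i$. In $C_0(X)$ pointwise we have $a\le s_F+\varepsilon$, hence $(a-\varepsilon)_+\le s_F$. This is the key point: the pointwise order in the commutative algebra converts norm closeness into an operator inequality against $(a-\varepsilon)_+$. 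Therefore
\[\varphi((a-\varepsilon)_+)\le\varphi(s_F)\le\sum_i\varphi(a_i),\]
and taking the supremum over $\varepsilon$ gives $\varphi(a)\le\sum_i\varphi(a_i)$. Thus (1) holds, and the theorem gives lower semi-continuity. The same pointwise argument handles countable or arbitrary families, since norm convergence of the net of partial sums is all that is used.

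\emph{Von Neumann case.} Here I show that spectral recovery is equivalent to $(FS_c)$.
\begin{itemize}
\item Given spectral recovery, fix $\varepsilon>0$. Take the spectral projections of $a$ in $W^*(1,a)$ over a fine partition $\varepsilon=t_0<t_1<\dots<t_k=\|a\|$ with mesh less than $\varepsilon$. With Borel functional calculus, set $s:=\sum_j t_{j-1}\chi_{(t_{j-1},t_j]}(a)$. Then $s$ has finite spectrum, $0\le s\le a$, and $(a-2\varepsilon)_+\le s$ because $t-2\varepsilon\le t_{j-1}$ on $(t_{j-1},t_j]$. Hence $\varphi((a-2\varepsilon)_+)\le\varphi(s)$, and $(FS_c)$ follows. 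The inequality in the other direction of $(FS_c)$ is trivial by monotonicity.
\item Conversely, assume $(FS_c)$. For $s\le a$ with finite spectrum in $W^*(1,a)$, let $\mu>0$ be its smallest nonzero eigenvalue and $p$ its support projection. Note $s\le a$ commute, and $s\ge\mu p$ with $p\le$ support of $a$ in the spectral sense.
\begin{itemize}
\item Since $s$ is a function of $a$, write $s=f(a)$ with $0\le f(t)\le t$ and $f$ taking finitely many values.
\item Then $f\le (t-\varepsilon)_+ + \varepsilon\,1_{\{f>0\}}$ pointwise. I expect this step to be the main obstacle, because the $\varepsilon$-term is not controlled by $\varphi$ if $\varphi(p)=\infty$.
\end{itemize}
To get around it, use instead that $f(t)>0$ forces $t\ge f(t)\ge\mu$. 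For $\varepsilon<\mu$,
\[f(t)\le \frac{\mu}{\mu-\varepsilon}\,(t-\varepsilon)_+\]
on $\{f>0\}$, since $t\ge\mu$ there and $f(t)\le t\le\frac{\mu}{\mu-\varepsilon}(t-\varepsilon)$. Thus
\[\varphi(s)\le\frac{\mu}{\mu-\varepsilon}\,\sup_{\varepsilon'}\varphi((a-\varepsilon')_+),\]
and letting $\varepsilon\to0$ gives spectral recovery.
\end{itemize}
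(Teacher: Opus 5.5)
Your proof is correct. The C$^*$-part is the paper's argument in slightly different packaging. You check condition (1) of Theorem~\ref{Theorem: C*} (sums) and invoke (1)$\Rightarrow$(3), whereas the paper checks lower semi-continuity on commutative subalgebras and invokes Corollary~\ref{cor:commutative-lsc}, which rests on the same implication. Your inequality $(a-\varepsilon)_+\le s_F$ is the paper's $(a-\varepsilon)_+\le a_n$.

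The von Neumann part takes a genuinely different route. You prove spectral recovery $\Leftrightarrow(FS_c)$ directly, by order domination alone. You get $(FS_c)$ from the domination $(a-2\varepsilon)_+\le s$ by a spectral step function; the paper instead uses lower semi-continuity along increasing finite-spectrum norm approximants. You get the converse from the gap estimate $s\le\frac{\mu}{\mu-\varepsilon}(a-\varepsilon)_+$. The paper instead uses $\varphi(s)=\sup_\varepsilon\varphi((s-\varepsilon)_+)$, obtained from finite additivity over the spectral projections of $s$ (including the case $\varphi(s)=\infty$), together with $(s-\varepsilon)_+\le(a-\varepsilon)_+$. Your route shows this equivalence holds for any monotone, positively homogeneous $\varphi$, with no use of additivity or of the main theorem. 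The paper's forward step is a one-liner once lower semi-continuity is available.

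One imprecision needs fixing: ``since $s$ is a function of $a$, write $s=f(a)$'' is not valid in general. On a non-separable Hilbert space, $W^*(1,a)$ can be strictly larger than the Borel functional calculus of $a$. For example, take $a=M_t\oplus D$ on $L^2[0,1]\oplus\ell^2([0,1])$ with $De_t=te_t$. The projection $1\oplus0$ lies in $W^*(1,a)$, but it equals no $h(a)$ with $h$ bounded Borel, since $h(D)=0$ forces $h\equiv0$ on $[0,1]$. The step is also unnecessary: you only use that $s$ and $a$ commute. So run your pointwise estimate on the Gelfand spectrum of $C^*(1,a,s)$, where $\hat s>0$ forces $\hat a\ge\hat s\ge\mu$. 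This is exactly how the paper justifies $(s-\varepsilon)_+\le(a-\varepsilon)_+$.
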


\begin{proof}
Norm lower semi-continuity and monotonicity imply the first displayed 
identity, since $(a-\varepsilon)_+\to a$ in norm as
$\varepsilon\downarrow0$. In the von Neumann algebra case,
the same argument applied to increasing finite-spectrum
spectral approximants converging to $a$ in norm gives $(FS_c)$.

Conversely, suppose the first displayed 
identity holds.
Let $C\subseteq A$ be a commutative C$^*$-subalgebra, and
let $a_n,a\in C_+$ with $a_n\to a$ in norm.
For every $\varepsilon>0$, eventually
$a_n\ge a-\varepsilon1$ in the unitization of $C$.
Since $a_n\ge0$ and these elements commute,
$a_n\ge(a-\varepsilon)_+$. Hence
\[
\liminf_n\varphi(a_n)\ge\varphi((a-\varepsilon)_+).
\]
Taking the supremum over $\varepsilon>0$ proves norm lower
semi-continuity on $C_+$. Corollary \ref{cor:commutative-lsc}
therefore gives norm lower semi-continuity on $A_+$.

Finally, suppose $A=M$ and $(FS_c)$ holds.
Fix $0\le s\le a$ with $s\in W^*(1,a)$ of finite spectrum.
Finite additivity and positive homogeneity give
$\varphi(s)=\sup_{\varepsilon>0}\varphi((s-\varepsilon)_+)$.  This 
includes the case when $\varphi(s)=\infty$.
Since $s$ and $a$ commute,
$(s-\varepsilon)_+\le(a-\varepsilon)_+$, so
\[
\varphi(s)\le
\sup_{\varepsilon>0}\varphi((a-\varepsilon)_+)
\le\varphi(a).
\]
Taking the supremum over these $s$ and using $(FS_c)$
proves the identity.
\end{proof}


We now prove Theorem \ref{Theorem: W*}.
Before the proof, we prepare two lemmas.
The proof of the first lemma can be found in II.4.13 of \cite{MR1873025}, but here we only need a much simpler form.

\begin{lemma}[Non-commutative Egorov theorem]\label{Lemma: Non-commutative Egorov theorem}
Let $M$ be a von Neumann algebra.
If $x_n\in M$ is a sequence such that $x_n\downarrow0$ as $n\to\infty$, then there is a non-zero projection $p\in M$ such that $\|px_np\|\downarrow0$ as $n\to\infty$.
\end{lemma}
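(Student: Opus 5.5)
The plan is to build a single projection $p$ that avoids, along a subsequence, the spectral regions where the $x_n$ are large; a normal state lets us control the size of what we remove. Note first that $x_n\downarrow0$ forces $x_n\ge0$ for all $n$, since $0$ is the infimum of the decreasing net. Assume $M\ne\{0\}$ and fix a normal state $\omega$ on $M$; a vector state $\omega=\langle\,\cdot\,\zeta,\zeta\rangle$ for a unit vector $\zeta$ in the Hilbert space on which $M$ acts will do. Normality of $\omega$ gives $\omega(x_n)\downarrow0$. Hence we can choose a subsequence $n_1<n_2<\cdots$ with
\[\omega(x_{n_k})<4^{-k}\qquad\text{for all }k.\]

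For each $k$, let $q_k\in M$ be the spectral projection of $x_{n_k}$ for the interval $[2^{-k},\infty)$. Then $2^{-k}q_k\le x_{n_k}$, so the Chebyshev-type estimate gives
\[\omega(q_k)\le 2^k\omega(x_{n_k})<2^{-k}.\]
Moreover $(1-q_k)x_{n_k}(1-q_k)\le 2^{-k}(1-q_k)$ by functional calculus, since $1-q_k$ commutes with $x_{n_k}$. Put $q:=\sup_{k\ge2}q_k$. By normality and countable subadditivity of $\omega$ on suprema of projections (which follows from $q\le\sum_k q_k$ in the strong sense, or by induction using $p\vee r\sim$ estimates via $\omega(p\vee r)\le\omega(p)+\omega(r)$ and normality), we get
\[\omega(q)\le\sum_{k\ge2}2^{-k}=\tfrac12<1.\]
Set $p:=1-q$. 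Then $\omega(p)>0$, so $p\ne0$.

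For $k\ge2$ we have $p\le 1-q_k$, so $p=p(1-q_k)$. Therefore
\[p\,x_{n_k}\,p=p(1-q_k)x_{n_k}(1-q_k)p\le 2^{-k}p,\]
which gives $\|p x_{n_k}p\|\le 2^{-k}$. Finally, monotonicity handles the intermediate indices: for $n\ge n_k$ we have $0\le px_np\le px_{n_k}p$, so $\|px_np\|$ is non-increasing in $n$ and is bounded by $2^{-k}$ eventually. Thus $\|px_np\|\downarrow0$.

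The only point needing care is the subadditivity $\omega(\bigvee_k q_k)\le\sum_k\omega(q_k)$. I would prove it for two projections from the standard inequality relating $p\vee r$ to $p+r$ on the range, or more simply via the vector state: $\|(p\vee r)\zeta\|^2\le\|p\zeta\|^2+\|r\zeta\|^2$ is not automatic. A cleaner route avoids this by replacing $q$ with the range projection of $\sum_k 2^{k}x_{n_k}$-type elements, or by noting the known inequality $p\vee r\le$ the range projection of $p+r$ together with $\omega(s(a))$ estimates. If this becomes awkward, I would instead use the Kaplansky-type formula $p\vee r-r\sim p-p\wedge r$, which holds in $M$, although that yields equivalence rather than $\omega$-control. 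The honest fallback is therefore to choose $\omega$ tracial-free but use $\omega(\bigvee q_k)\le\sum\omega(q_k)$ for vector states, which does hold when $\omega(q_k)$ is replaced by the bound $\omega(q)\le\sum_k\omega(q_k)$ obtained from $q\le$ the strong limit of $\mathrm{supp}(\sum_{k\le N}q_k)$ and $\mathrm{supp}(a)\le$ estimates. This step is where I expect the main obstacle.
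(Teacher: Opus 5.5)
\textbf{The gap is the subadditivity step.} The rest of your argument is fine: the Chebyshev bound for each $q_k$, the estimate $px_{n_k}p\le 2^{-k}p$ once $p\le 1-q_k$, and the final monotonicity step. But the inequality $\omega(\bigvee_k q_k)\le\sum_k\omega(q_k)$ is not a technicality. It is false for non-tracial states, even vector states.

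In $M_2(\mathbb{C})$ with $\omega=\langle\,\cdot\,e_1,e_1\rangle$, let $q'$ be the projection onto $e_2$ and $q''$ the projection onto $v=(\sin\theta,\cos\theta)$, with $\theta$ small and nonzero. Then $\omega(q')+\omega(q'')=\sin^2\theta$, but $q'\vee q''=1$.

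None of your fallbacks repairs this:
\begin{enumerate}
\item $q'\vee q''$ is indeed the range projection of $q'+q''$, but $\omega$ of a range projection is not controlled by $\omega$ of the element.
\item The inequality $q'\vee q''\le q'+q''$ fails, since the smaller eigenvalue of $q'+q''$ is $1-\cos\theta$.
\item Kaplansky's formula only gives equivalence of projections, which a non-tracial $\omega$ does not respect.
\end{enumerate}

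Worse, your construction itself can output $p=0$. With the same $\omega$, take $a=\mathrm{diag}(1/32,1/2)$ and $b=\tfrac14|v\rangle\langle v|$ with $\sin^2\theta=1/100$; a determinant check gives $0\le b\le a$. Let $x_1=x_2=a$ and $x_n=b/(n-2)$ for $n\ge3$, so that $x_n\downarrow0$. The choice $n_1=1$, $n_2=2$, $n_3=3$ (continued with large $n_k$) is admissible, since $\omega(a)=1/32$ and $\omega(b)=1/400$. But $q_2=1_{[1/4,\infty)}(a)$ is the projection onto $e_2$ and $q_3=1_{[1/8,\infty)}(b)$ is the projection onto $v$. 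Hence $q\ge q_2\vee q_3=1$ and $p=0$.

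\textbf{The missing idea is nesting.} Chebyshev only controls projections cut out of a single element, so you must not take joins of independently chosen spectral projections. The paper proceeds as follows.
\begin{enumerate}
\item Start with $p_0=1$. Having $p_k$, choose $n_{k+1}$ afterwards so that $\omega(p_kx_{n_{k+1}}p_k)$ is small; this is possible because $p_kx_np_k\downarrow0$.
\item Let $p_{k+1}$ be the spectral projection of $p_kx_{n_{k+1}}p_k$ for $[0,2^{-(k+1)}]$, computed in the corner $p_kMp_k$.
\item Then $p_{k+1}\le p_k$, the removed pieces $p_k-p_{k+1}$ are mutually orthogonal, and $\omega(p_k-p_{k+1})\le 2^{k+1}\omega(p_kx_{n_{k+1}}p_k)$.
\item For $p:=\inf_kp_k$, normality gives the exact identity $\omega(1-p)=\sum_k\omega(p_k-p_{k+1})$, with no subadditivity needed. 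Hence $p\neq0$.
\item Since $p\le p_{k+1}\le p_k$, we get $px_{n_{k+1}}p=pp_{k+1}(p_kx_{n_{k+1}}p_k)p_{k+1}p\le 2^{-(k+1)}p$, and your monotonicity step finishes.
\end{enumerate}
So the essential change is that the subsequence and the projections must be built adaptively, each one inside the corner cut out by the previous one.
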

\begin{proof}
Let $\omega$ be a normal state on $M$.
By induction on $k$ with $p_0=1\in M$, choose $x_{n_{k+1}}$ and define $p_{k+1}$ such that
\[\omega(p_kx_{n_{k+1}}p_k)<2^{-2(k+1)-1},\qquad p_{k+1}:=1_{[0,2^{-(k+1)}]}(p_kx_{n_{k+1}}p_k)\in p_kMp_k.\]
Let $p:=\inf_kp_k$.
Now 
\[\omega(p_k-p_{k+1})\le2^{k+1}\omega(p_kx_{n_{k+1}}p_k)<2^{-(k+1)-1}.\]
By the normality of $\omega$ we have $\omega(1-p)=\lim_{m\to\infty}\omega(1-p_m)$. So  $p\ne0$ since 
\[\omega(p)=1-\sum_{k=0}^\infty\omega(p_k-p_{k+1})\ge2^{-1}.\]
Then, $px_{n_k}p\le2^{-k}p$ and the monotonicity of $x_n$ give $\|px_np\|\downarrow0$ as $n\to\infty$.
\end{proof}

\begin{lemma}[Non-commutative order-density]\label{Lemma: Non-commutative order-density}
If $A$ is a $\sigma$-weakly dense hereditary C$^*$-subalgebra of $M$, then for any non-zero projection $r\in M$ there is a non-zero projection $p\in A$ such that $p\le r$.
\end{lemma}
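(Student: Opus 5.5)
Given a non-zero projection $r\in M$, we want a non-zero projection $p\in A$ with $p\le r$. The plan is to first produce a non-zero positive element of $A$ that lives under $r$, and then extract a projection from it by functional calculus inside the von Neumann algebra generated, using heredity of $A$ to keep things in $A$.

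\emph{Step 1: a non-zero element $ra\ne 0$.} Since $A$ is $\sigma$-weakly dense in $M$ and $r\ne0$, by Kaplansky density (or just density of $A_+$ in $M_+$, using that a bounded approximate unit $u_\lambda$ of $A$ increases to $1$ $\sigma$-weakly), there is $a\in A_+$ with $rar\ne0$; indeed $r u_\lambda r\to r$ $\sigma$-weakly. \emph{Step 2: heredity.} Here lies the main obstacle: $rar$ need not lie in $A$, since $r\notin A$. We use instead that a $\sigma$-weakly dense hereditary C$^*$-subalgebra of $M$ is in fact an ideal-like object: $A=L\cap L^*$ with $L=\overline{MA}$ a left ideal, and its $\sigma$-weak closure is $M$. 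So I would pass to $x:=a^{1/2} r a^{1/2}$, which lies in $A$ since $0\le x\le a$ and $A$ is hereditary, and $x\neq0$ because $\|x\|=\|r a^{1/2}\|^2=\|rar\|$. \emph{Step 3: spectral projection.} For small $\varepsilon>0$ with $\|x\|>\varepsilon$, consider $q:=1_{[\varepsilon,\infty)}(x)\in M$, a non-zero projection with $q\le \varepsilon^{-1}x$, hence $q\in A$ by heredity (hereditary C$^*$-subalgebras of a von Neumann algebra contain every positive element of $M$ dominated by one of their elements). But $q$ is under the range of $a^{1/2}r$, not under $r$.

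\emph{Step 4: transport under $r$.} Write $v$ for the partial isometry in the polar decomposition $r a^{1/2} q = v|ra^{1/2}q|$. Then $|ra^{1/2}q|^2=qa^{1/2}ra^{1/2}q=qxq\ge \varepsilon q$, so $ra^{1/2}q$ is bounded below on $qH$, $v^*v=q$ and $p:=vv^*=$ the range projection of $ra^{1/2}q$, which is $\le r$ and non-zero. To see $p\in A$: $p$ is the range projection of $y:=ra^{1/2}q$, and $p\le \varepsilon^{-1} yy^* = \varepsilon^{-1} r a^{1/2} q a^{1/2} r$. This is where heredity in the form ``$A=L\cap L^*$'' must be used more cleverly, since $yy^*$ involves $r$ on the outside. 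If this fails, I would instead run the argument with $r$ replaced on the left: choose $a$ so that $a^{1/2}ra^{1/2}$ and $ra r$ play symmetric roles and use that $p= y|y|^{-1}q|y|^{-1}y^*$ with $y^*=q a^{1/2} r$; membership in $A$ would then follow if $ra^{1/2}\in \overline{MA}$, which gives $p\in L$, and $p=p^*$ gives $p\in L\cap L^*=A$. Verifying this left-ideal bookkeeping, in particular that $q\in A$ implies $yq\in L$ and that $p=y(|y|^{-2}q)y^*$ with $|y|^{-2}q\in qMq$ forces $p\in L\cap L^*$, is the step I expect to require the most care.
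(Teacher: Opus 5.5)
Your Steps 1--3 are correct: $x=a^{1/2}ra^{1/2}\le a$ and $q=1_{[\varepsilon,\infty)}(x)\le\varepsilon^{-1}x$ do lie in $A$. Step 4, however, is a genuine gap, and it is not a matter of bookkeeping. The projection $p=vv^*$ is only Murray--von Neumann equivalent to $q\in A$, and hereditary C$^*$-subalgebras are not closed under equivalence.

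Your reduction is circular. Since $\varepsilon p\le yy^*\le\|y\|^2p$, we have $p\in A$ if and only if $yy^*=ra^{1/2}qa^{1/2}r\in A$, which is the original difficulty with $r$ on the outside. The fallback does not help either. The fact $ra^{1/2}\in MA$ is trivial and gives $y\in L$. But to get $p=y(|y|^{-2}q)y^*\in L$ you need the right-hand factor $y^*=qa^{1/2}r$ in $L$, that is $yy^*\in A$, which is the same condition again.

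The construction really does fail. Let $\{e_n\}\cup\{f_n\}$ be an orthonormal basis of $H$, let $\mathcal U$ be a free ultrafilter on $\mathbb N$, set $\omega(z)=\lim_{\mathcal U}\langle ze_n,e_n\rangle$ on $M=B(H)$, and let $A=\{z:\omega(z^*z)=\omega(zz^*)=0\}$. Then $A$ is hereditary and contains the compacts, so it is $\sigma$-weakly dense. Take $r$ to be the projection onto $\overline{\mathrm{span}}\{(e_n+f_n)/\sqrt2\}$ and $a\in A$ the projection onto $\overline{\mathrm{span}}\{f_n\}$. Then $rar=\tfrac12 r\neq0$, $x=ara=\tfrac12a$, $q=a$, $y=ra$, and $vv^*=2rar=r$. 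So $p=r$, with $\omega(p)=\tfrac12$, hence $p\notin A$.

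The missing idea is an analytic step that turns $\sigma$-weak density into norm approximation on a suitably chosen, possibly proper, subprojection of $r$. In the example $r\notin A$, so some cutting down is unavoidable. The paper proceeds as follows.

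1. Choose a normal state $\omega$ with $s(\omega)\le r$ and replace $r$ by $s(\omega)$.
2. Form $h=\sum_n2^{-n}h_n\in A_+$ from positive contractions $h_n\in A$ with $\omega(h_n)\to1$. This forces $r\le s(h)$.
3. The spectral projections $e_n=1_{[1/n,\infty)}(h)\le nh$ lie in $A$ by heredity, and $r(1-e_n)r\downarrow0$.
4. The non-commutative Egorov lemma gives a non-zero projection $p\le r$ with $\|(1-e_n)p\|^2=\|p(1-e_n)p\|\to0$.
5. Hence $p=\lim_n e_npe_n$ in norm, with $e_npe_n\in A$ by heredity, and norm-closedness of $A$ gives $p\in A$.

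Your argument has no counterpart to the Egorov step, and that step is what makes the lemma work.
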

\begin{proof}
Choose a non-zero normal state $\omega$ on $M$ such that $s(\omega)\le r$. We may replace $r$ with $r=s(\omega)$.
Since $A$ is $\sigma$-weakly dense in $M$, there is a sequence of positive contractions $h_n\in A$, but not necessarily increasing, such that $\omega(h_n)\to1$ as $n\to\infty$.
Define $h:=\sum_n2^{-n}h_n\in A$, and define an increasing sequence $e_n:=1_{[n^{-1},\infty)}(h)$.  These are in $A$ since  $A$ is hereditary and $e_n\le nh\in A$.
Since $\omega(h_n)\to1$  and $s(h_n) \leq s(h)$, we have $\omega(s(h)) = 1$. Thus $s(\omega) = r\le s(h)$.
This implies $r(1-e_n)r\downarrow0$ $\sigma$-weakly as $n\to\infty$.  By applying the non-commutative Egorov theorem (Lemma \ref{Lemma: Non-commutative Egorov theorem}) on $rMr$, we can find a non-zero projection $p\in rMr$ such that $\|p(1-e_n)p\|\downarrow0.$  Thus $\|(1-e_n)p\|\to 0$, and so $\|e_npe_n-p\|\to0$.
Now $e_npe_n\in A$ since  $A$ is hereditary, and so $p\in A$.  The claim follows.
\end{proof}

\begin{theorem}
Let $\varphi$ be a weight on a von Neumann algebra $M$.
The following are equivalent.
\begin{enumerate}
\item[(i)] $\varphi$ is normal.
\item[(ii)] $\varphi$ is completely additive on projections and norm lower semi-continuous.
\item[(iii)] $\varphi$ is completely additive on projections
and satisfies the spectral finite-spectrum recovery property $(FS_c)$.
\end{enumerate}
\end{theorem}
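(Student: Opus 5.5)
The plan is to dispose of the easy implications first. For (i)$\Rightarrow$(ii): a normal weight is $\sigma$-weakly lower semi-continuous by Haagerup's equivalence, so it is norm lower semi-continuous. Complete additivity on projections follows because, for orthogonal projections $p_i$, the finite partial sums increase to $\sum_ip_i$. The equivalence (ii)$\Leftrightarrow$(iii) is exactly Corollary \ref{cor:spectral-recovery-lsc} in the von Neumann algebra case. So the real work is (ii)$\Rightarrow$(i), and I will aim to verify condition (4) of Theorem \ref{Theorem: W*}.

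Assume (ii). By the Combes theorem, $\varphi=\sup_{\omega\in\mathcal F}\omega$ pointwise, where $\mathcal F$ is the set of positive linear functionals $\omega\le\varphi$. Each such $\omega$ decomposes as $\omega=\omega_n+\omega_s$ into normal and singular parts. I will show that $\psi:=\sup\{\omega_n:\omega\in\mathcal F\}$, a normal weight with $\psi\le\varphi$, actually equals $\varphi$.

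The first step is to control the singular parts using projections. Let $p$ be a projection with $\varphi(p)<\infty$. A singular functional $\omega_s$ vanishes on some family of orthogonal projections with sum $1$. Complete additivity of $\varphi$ on projections, together with $\omega\le\varphi$, should then force the singular part to be small on $p$. Concretely, write $p$ as the sum $\sum p q_i p$; since $\varphi$ is completely additive on the relevant projections, $\varphi(p)=\sum\varphi(p_i)$ for an orthogonal decomposition $p=\sum p_i$, where each $p_i$ is chosen so that $\omega_s(p_i)=0$. I expect some use of Lemma \ref{Lemma: Non-commutative order-density} here. It should let me find, under any nonzero projection, a subprojection in the hereditary subalgebra $\{x:\omega_s(x^*x)=0\}$-type ideal, which is $\sigma$-weakly dense because $\omega_s$ is singular. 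A maximality (Zorn) argument then gives an orthogonal family $p_i$ with $\sum p_i=p$ and $\omega_s(p_i)=0$. Summing yields $\omega_n(p)=\sum\omega_n(p_i)=\sum\omega(p_i)$. Taking sup over $\omega$ and using complete additivity together with $\omega(p_i)\uparrow\varphi(p_i)$ approximately, I expect $\psi(p)=\varphi(p)$. Doing the same for the reduced weights on $pMp$ gives $\psi=\varphi$ on finite-spectrum elements supported under such $p$.

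Next I extend to all of $M_+$. If $\varphi(p)=\infty$, complete additivity should let me decompose $p$ into pieces: either some subprojection has infinite $\psi$ as well, or the pieces are finite and their sum still gives $\psi(p)=\infty$. For finite-spectrum $s$, write $s=\sum\lambda_jq_j$ with spectral projections $q_j$; additivity of both weights gives $\psi(s)=\varphi(s)$. Then $(FS_c)$, which holds by (ii)$\Rightarrow$(iii), gives $\varphi(a)=\sup_s\varphi(s)=\sup_s\psi(s)\le\psi(a)$. Hence $\varphi=\psi$ is a supremum of normal functionals, so (4) holds, and Haagerup's equivalence yields (i).

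The main obstacle is the singular-part step: getting $\sup_\omega\omega_n(p)=\varphi(p)$ rather than merely $\le$. The issue is that the decomposition $\{p_i\}$ depends on $\omega$. The exchange of sup and sum then needs the complete additivity $\varphi(p)=\sum_i\varphi(p_i)$ applied to $\omega$-dependent families, combined with a near-optimal choice of $\omega$ for $\varphi(p)$. If that fails, I would instead argue by contradiction: suppose $\varphi-\psi$ is nonzero on some projection. Then use Lemma \ref{Lemma: Non-commutative order-density} and the Egorov lemma to extract an orthogonal family of projections on which complete additivity fails.
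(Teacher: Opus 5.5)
Your architecture is a reasonable alternative to the paper's: Combes on all of $M$, split each $\omega\le\varphi$ as $\omega_n+\omega_s$, and show the normal parts already recover $\varphi$. However, the proposal does not prove its own crux, and it leaves a second case open.

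First, for a projection $p$ with $\varphi(p)<\infty$ you never establish $\sup_\omega\omega_n(p)=\varphi(p)$. The step ``take the sup over $\omega$ using $\omega(p_i)\uparrow\varphi(p_i)$ approximately'' is not an argument, precisely because $\{p_i\}$ depends on $\omega$. The missing idea is that no exchange of supremum and sum is needed: one shows $\omega_s(p)=0$ \emph{exactly}, for each $\omega$ separately. This is the fact the paper exploits on the corners $e_nMe_n$. There $\varphi|_{pMp}$ is a bounded positive functional, completely additive on projections, hence normal, and whatever it dominates is normal. Directly, with your family $p=\sum_ip_i$, $\omega_s(p_i)=0$, and finite partial sums $p_K$, one has $\omega(p)-\omega_n(p_K)=\omega(p-p_K)\le\varphi(p)-\varphi(p_K)$. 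Letting $K$ increase, using normality of $\omega_n$ and complete additivity of $\varphi$, gives $\omega_s(p)\le0$. Hence $\omega=\omega_n$ on $(pMp)_+$, and Combes yields $\psi=\varphi$ there. Singularity of $\omega_s$ alone produces the $p_i$; the order-density lemma is not needed at this point.

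Second, your dichotomy for $\varphi(p)=\infty$ ignores a projection $p_0\neq0$ with \emph{no} nonzero subprojection of finite weight, i.e.\ the non-semifinite part. Nothing in your argument forces $\psi(p_0)=\infty$. The paper handles this by first cutting down to $eMe$, where $e$ is the support of $\overline{\mathfrak{M}_\varphi}^{\,\sigma\text{-weak}}$. Then $\varphi$ is $\infty$ on every positive element outside $eMe$, and normality only concerns increasing nets. Inside $eMe$, the order-density lemma plus Zorn write every projection as an orthogonal sum of finite-weight projections.

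Since you aim at (4) rather than (i), you would also need normal functionals below $\varphi$ that blow up on $p_0$. An example is $n\,\theta((1-e)\,\cdot\,(1-e))$ with $\theta$ a normal state not vanishing at $(1-e)p_0(1-e)$. These are $\le\varphi$ because $\varphi(y)<\infty$ forces $y\in eMe$, and order density shows $p_0\not\le e$.

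Finally, $\psi$ is a supremum of a family not known to be upward directed, so it is only subadditive. Your appeal to ``additivity of both weights'' for finite-spectrum $s$ is therefore unjustified. Argue instead via the support projection $r$ of $s$, as the paper does. If $\varphi(s)<\infty$ then $\varphi(r)<\infty$ and the first step applies; otherwise $s\ge\lambda r$ with $\varphi(r)=\infty$.
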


\begin{proof} Normality implies (ii). The equivalence of (ii) and (iii) follows from Corollary \ref{cor:spectral-recovery-lsc}. Thus it remains for us to prove that (ii) implies (i).

Let $\varphi$ be a weight on a von Neumann algebra $M$ that is norm lower semi-continuous and completely additive on projections.
Let $A:=\overline{\mathfrak{M}}_\varphi^{\|\cdot\|}$ be the norm closure of $\mathfrak{M}_\varphi$ in $M$, a hereditary C$^*$-subalgebra of $M$.
Let $e\in M$ be the support of the $\sigma$-weak closure of $A$ in $M$, a projection such that $eMe=\overline A{}^{\,\sigma\text{-weak}}$.
The restriction $\varphi|_{(eMe)_+}$ is also completely additive on projections and norm lower semi-continuous.
If this is normal, then we can prove $\varphi$ is normal by dividing the cases by whether there is $i$ such that $\varphi(x_i)=\infty$ for a bounded non-degreasing net $x_i\in M_+$.
That is, with $x:=\sup_ix_i$, if $\varphi(x_i)=\infty$ for some $i$ then $\varphi(x)\ge\sup_i\varphi(x_i)=\infty$, otherwise $x_i\in\mathfrak{M}_\varphi\subset eMe$ gives $x\in eMe$ so that $\varphi(x)=\sup_i\varphi(x_i)$.
This allows by replacing $\varphi$ to $\varphi|_{(eMe)_+}$ to assume that $A$ is $\sigma$-weakly dense in $M$, equivalently that $\varphi$ is semi-finite.
Since $\varphi$ is norm lower semi-continuous on $M$, so is its restriction to $A$.
By Combes' theorem, for $F^*:=\{\omega\in A^{*}_+:\omega\le\varphi|_{A_+}\}$, we can write
\[\varphi(a)=\sup_{\omega\in F^*}\omega(a),\qquad a\in A_+.\]

In the inclusion $M_*\subset A^*$ by the semi-finiteness, we show $F^*\subset M_*$ as follows.
Let $\widetilde\omega\in M^*$ be a norm preserving extension of $\omega\in F^*$.
Since $\omega$ is positive, $\widetilde\omega$ is automatically positive by $\|\widetilde\omega\|=\|\omega\|=\sup_i\omega(e_i)$ for an approximate unit $e_i\in A$.
We also have $\widetilde\omega\le\varphi$ by $\mathfrak{M}_\varphi\subset A$.
Since $(\mathfrak{M}_\varphi)_+$ is norm dense in $A_+$ by functional calculus, we may choose a sequence $a_n\in\mathfrak{M}_\varphi$ with $a_n \geq 0$ and $\|a_n\|\le1$ such that $\omega(a_n)\to\|\omega\|$.
Then, $e_n:=1_{[1/2,\infty)}(a_n)\in\mathfrak{M}_\varphi$ satisfies
\[\widetilde\omega(1-e_n)\le2\widetilde\omega(1-a_n)=2(\|\omega\|-\omega(a_n))\to0,\qquad n\to\infty.\]
Since $e_n\in\mathfrak{M}_\varphi$, the restriction $\varphi|_{(e_nMe_n)_+}$ extends to a bounded positive functional on $e_nMe_n$.
It is completely additive on the projections in this
corner, and hence it is normal.
A positive functional dominated by a normal positive functional is normal, and so $\widetilde\omega|_{e_nMe_n}$ is also normal by $\widetilde\omega\le\varphi$.
Composing it with the normal compression
$M\to e_nMe_n:x\mapsto e_nxe_n$ therefore gives a normal positive functional $\omega_n$ on $M$, where
$\omega_n(x):=\widetilde\omega(e_nxe_n)$ for $x\in M$.
By Cauchy-Schwarz on the identity $\widetilde\omega(x-e_nxe_n)=\widetilde\omega((1-e_n)x)+\widetilde\omega(e_nx(1-e_n))$ for $x\in M$, we have 
\[\|\widetilde\omega-\omega_n\|^2\le4\|\widetilde\omega\|\widetilde\omega(1-e_n)\to0,\qquad n\to\infty.\]
Since $M_*$ is norm closed in $M^*$, it follows that 
$\widetilde\omega\in M_*$.
Thus we have $F^*\subset M_*$. 

Let $r\in M$ be any projection.
By the order density lemma (Lemma \ref{Lemma: Non-commutative order-density}) and Zorn's lemma, there is a family $p_k\in A$ of orthogonal projections such that $r=\sum_kp_k$.
Let $p_K\in A$ be the net of all finite partial sums of $\sum_kp_k$ so that $\sum_kp_k=\sup_Kp_K$.
By the complete additivity on orthogonal projections of $\varphi$, the weight $\varphi$ preserves the supremum of $p_K$.
Because $p_K\in A$, we may write $\varphi(p_K)=\sup_{\omega\in F^*}\omega(p_K)$.
Note that each functional $\omega\in F^*$ is normal as we have seen.
Thus,
\[\varphi(r)=\sup_K\varphi(p_K)=\sup_K\sup_{\omega\in F^*}\omega(p_K)=\sup_{\omega\in F^*}\sup_K\omega(p_K)=\sup_{\omega\in F^*}\omega(r).\]

Let $s\in M$ be any positive element of finite spectrum.
If $\varphi(s)<\infty$, then $s\in\mathfrak{M}_\varphi\subset A$ implies the formula $\varphi(s)=\sup_{\omega\in F^*}\omega(s)$ by the Combes theorem.
Assume that $\varphi(s)=\infty$.
Let $\lambda>0$ be the smallest positive value of the spectrum of $s$, and let $r:=1_{(0,\infty)}(s)$.
Then, $s\le\|s\|r$ implies $\varphi(r)=\infty$, and $\lambda r\le s$ implies $\infty=\varphi(r)=\sup_{\omega\in F^*}\omega(r)\le\lambda^{-1}\sup_{\omega\in F^*}\omega(s)$.
Thus, in any cases,
\[\varphi(s)=\sup_{\omega\in F^*}\omega(s).\]

Let $x\in M$ be any positive element.
Let $s_n\in W^*(x)$ be a sequence of spectral step functions such that $s_n\uparrow x$ as $n\to\infty$ in norm.
By the norm lower semi-continuity of $\varphi$, the weight $\varphi$ preserves the limit $s_n\uparrow x$.
Since each $s_n$ has finite spectrum, we have $\varphi(s_n)=\sup_{\omega\in F^*}\omega(s_n)$.
Every positive linear functional preserves norm limits.
Thus,
\[\varphi(x)=\sup_n\varphi(s_n)=\sup_n\sup_{\omega\in F^*}\omega(s_n)=\sup_{\omega\in F^*}\sup_n\omega(s_n)=\sup_{\omega\in F^*}\omega(x).\]
This proves (4) in Theorem \ref{Theorem: W*} for $\varphi$.
Therefore, $\varphi$ is normal.\qedhere

\end{proof}

\begin{corollary}[Haagerup's second problem]
A weight on a von Neumann algebra that is normal on commutative von Neumann subalgebras is normal.
\end{corollary}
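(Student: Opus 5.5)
The plan is to verify condition (ii) of the preceding theorem, namely complete additivity on projections together with norm lower semi-continuity, and then invoke the implication (ii)$\Rightarrow$(i). So let $\varphi$ be a weight on a von Neumann algebra $M$ whose restriction to every commutative von Neumann subalgebra is normal.

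\emph{Complete additivity on projections.} Let $(p_k)$ be a family of mutually orthogonal projections in $M$. The von Neumann algebra $N$ generated by $\{p_k\}$ and $1$ is commutative. The finite partial sums $p_K$ form a bounded increasing net in $N_+$ with supremum $\sum_k p_k \in N$, and this supremum is the same whether it is computed in $N$ or in $M$. Normality of $\varphi|_{N_+}$ therefore gives
\[
\varphi\Bigl(\sum_k p_k\Bigr)=\sup_K\varphi(p_K)=\sum_k\varphi(p_k).
\]

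\emph{Norm lower semi-continuity.} By Corollary \ref{cor:commutative-lsc}, it suffices to show that $\varphi$ is norm lower semi-continuous on every commutative C$^*$-subalgebra $C\subseteq M$. Let $N:=W^*(C,1)$, which is a commutative von Neumann subalgebra. On $N$, normality in the sense of directed suprema is equivalent to $\sigma$-weak lower semi-continuity, by the equivalence of (2.5) and (3) in Theorem \ref{Theorem: W*}, which is Haagerup's result applied to the von Neumann algebra $N$. Since norm convergence implies $\sigma$-weak convergence, $\varphi|_{N_+}$ is norm lower semi-continuous, and hence so is $\varphi|_{C_+}$.

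One could instead avoid citing Haagerup's theorem on $N$ and argue directly. Every positive element of $N$ is a norm limit of an increasing sequence of finite-spectrum elements in the same algebra, so the identity $\varphi(a)=\sup_{\varepsilon>0}\varphi((a-\varepsilon)_+)$ follows from normality on $N$, because $(a-\varepsilon)_+\uparrow a$ is a directed supremum in $W^*(a,1)$. One then applies Corollary \ref{cor:spectral-recovery-lsc}.

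Combining the two steps, condition (ii) of the preceding theorem holds, and that theorem yields normality of $\varphi$. I do not expect a serious obstacle. The only point requiring care is that suprema computed inside a commutative subalgebra agree with suprema in $M$. This holds because a von Neumann subalgebra is $\sigma$-weakly closed and bounded increasing nets converge strongly to their supremum.
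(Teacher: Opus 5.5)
Your proof is correct and has the same structure as the paper's: complete additivity on projections comes from the commutative von Neumann algebra generated by the orthogonal family, norm lower semi-continuity comes from commutative detection for C$^*$-algebras, and then (ii)$\Rightarrow$(i) of the last theorem gives normality. The one difference is how you reach the C$^*$-hypothesis. The paper observes that normality on $W^*(C,1)$ already gives preservation of norm-convergent positive sums, since such a sum is the supremum of its partial sums; this is condition (1) of Theorem \ref{Theorem: C*}. Your appeal to Haagerup's theorem on $N$ is valid but unnecessary, and your alternative via $(a-\varepsilon)_+\uparrow a$ is essentially the paper's elementary route.
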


\begin{proof} Every commutative C\(^*\)-subalgebra of \(M\) is contained in a unital commutative von Neumann subalgebra. Hence the hypothesis implies that \(\varphi\) preserves norm-convergent positive sums on commutative C\(^*\)-subalgebras. By the earlier theorem, \(\varphi\) is norm lower semi-continuous. Moreover, every mutually orthogonal family of projections lies in a commutative von Neumann subalgebra, so \(\varphi\) is completely additive on projections. the last theorem  therefore implies that \(\varphi\) is normal. \end{proof}

\section{Operator-valued commutative detection}

It is almost immediate that commutative detection for ordinary weights implies commutative detection for Haagerup's operator-valued weights \cite{MR534673}, indeed more generally for extended-positive-valued weights between von Neumann algebras.  We recall that for a von Neumann algebra $N$, its \emph{extended positive cone}
$\widehat{N}_+$ consists of additive, positively homogeneous,
norm-lower-semi-continuous functions
\[
    h \colon (N_*)_+ \longrightarrow [0,\infty],
    \qquad h(0)=0.
\]
Write $\omega(h)=h(\omega)$ for $\omega\in (N_*)_+$.
An element $y\in N_+$ is identified with
$\omega\mapsto\omega(y)$.
Addition, scalar multiplication, and order are pointwise.
This is the functional realization of the extended positive part
used in operator-valued weight theory  (see Section~1 of \cite{MR534673}).

The following then follows immediately from the definitions:

\begin{lemma}
Every increasing net $(h_i)$ in $\widehat{N}_+$ has a supremum
characterized by
\[
    \omega\left(\sup_i h_i\right)
    = \sup_i \omega(h_i), 
    \qquad \omega\in (N_*)_+.
\]
Equality and order in $\widehat{N}_+$ are determined by all the
evaluations at $\omega\in (N_*)_+$.
\end{lemma}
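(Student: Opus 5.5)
The plan is to define the candidate supremum pointwise and check that it lies in $\widehat{N}_+$. Given an increasing net $(h_i)$ in $\widehat{N}_+$, set
\[
h(\omega):=\sup_i h_i(\omega)=\sup_i\omega(h_i),\qquad \omega\in(N_*)_+ .
\]
I will verify in turn that $h(0)=0$, that $h$ is positively homogeneous, that $h$ is additive, and that $h$ is norm lower semi-continuous.

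The conditions $h(0)=0$ and positive homogeneity are immediate, since each $h_i$ has these properties and they pass through suprema. Additivity is the one place where the monotonicity of the net is used. Because the net is directed and increasing, the supremum is a limit:
\[
h(\omega+\omega')=\lim_i h_i(\omega+\omega')=\lim_i\bigl(h_i(\omega)+h_i(\omega')\bigr)=h(\omega)+h(\omega'),
\]
where the limits are taken in $[0,\infty]$ and a sum of two increasing nets converges to the sum of the limits. Lower semi-continuity follows because a pointwise supremum of norm lower semi-continuous functions is again lower semi-continuous: the set $\{h>t\}=\bigcup_i\{h_i>t\}$ is open. Hence $h\in\widehat{N}_+$.

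It remains to show that $h$ is the supremum in the pointwise order. Since order in $\widehat{N}_+$ is pointwise by definition, we have $h_i\le h$ for every $i$. If $k\in\widehat{N}_+$ satisfies $h_i\le k$ for all $i$, then $h(\omega)\le k(\omega)$ for every $\omega$. So $h=\sup_i h_i$, and the displayed formula holds by construction.

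The final sentence of the statement, that equality and order are determined by evaluations, is simply the definition, since elements of $\widehat{N}_+$ are functions on $(N_*)_+$ with pointwise order. I expect no step to be a genuine obstacle. The only point needing care is additivity, where one must use that the net is increasing and directed to turn the supremum into a limit; for a non-directed family this step would fail.
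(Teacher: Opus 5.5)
Your proof is correct and is the routine verification the paper has in mind; the paper gives no argument beyond saying the lemma ``follows immediately from the definitions.'' You check that the pointwise supremum is again in $\widehat{N}_+$, using directedness of the increasing net for additivity and $\{h>t\}=\bigcup_i\{h_i>t\}$ for lower semi-continuity, and you note that order is pointwise by definition.
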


An \emph{extended-positive-valued weight} from $A$ to $N$
is a map $T\colon A_+\to \widehat{N}_+$ 
which vanishes at zero and is additive and positively homogeneous. This of course include the special case of maps into $N_+$. 
For $\omega\in (N_*)_+$, its \emph{scalarization} is the scalar weight
\[
    \varphi_\omega(a)=\omega(T(a)).
\]
For a unital inclusion $N\subseteq M$, an
\emph{operator-valued weight} in Haagerup's sense also satisfies
\[
    T(b^*xb)=b^*T(x)b, 
    \qquad x\in M_+,\ b\in N.
\]
\noindent Here  the element $b^*hb$ is defined by
 $(b^*hb)(\omega)=h(\omega_b)$ for $b \in N$, and 
    $\omega_b(y)=\omega(b^*yb)$.

From the lemma and commutative detection for ordinary weights, the following is immediate: 

\begin{theorem}[Operator-valued commutative determination]
\label{thm:operator-valued-commutative-determination}
Let $T\colon M_+\to\widehat{N}_+$ be additive, positively
homogeneous, and zero at zero. The following conditions are equivalent:
\begin{enumerate}
    \item[(i)]
    $T$ is normal:
    \[
        T(x)=\sup_i T(x_i)
        \qquad\text{whenever } x_i\uparrow x \text{ in } M_+.
    \]

    \item[(ii)]
    $T$ is normal on every unital commutative von Neumann
    subalgebra of $M$.

    \item[(iii)]
    $\omega\circ T$ is a normal scalar weight for every
    $\omega\in (N_*)_+$.

    \item[(iv)]
    $\omega\circ T$ is ultraweakly lower semi-continuous
    for every $\omega\in (N_*)_+$.
\end{enumerate}
In \textup{(ii)}, it suffices to test maximal abelian
von Neumann subalgebras. No faithfulness or semi-finiteness
is assumed.
\end{theorem}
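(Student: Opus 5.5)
The plan is to reduce everything to scalarizations via the lemma on suprema in $\widehat{N}_+$, and then to invoke Theorem \ref{Theorem: W*} together with the Corollary on Haagerup's second problem for each scalar weight $\varphi_\omega=\omega\circ T$.

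First we check that each $\varphi_\omega$ is a weight. It vanishes at zero, is additive and is positively homogeneous, because evaluation at $\omega$ respects the pointwise operations in $\widehat{N}_+$.

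Next, (i)$\Leftrightarrow$(iii). By the lemma, $T(x)=\sup_iT(x_i)$ in $\widehat{N}_+$ holds if and only if $\omega(T(x))=\sup_i\omega(T(x_i))$ for every $\omega\in(N_*)_+$. Indeed, the net $T(x_i)$ is increasing by monotonicity of $T$, so its supremum exists and is computed pointwise, and equality in $\widehat{N}_+$ is tested by evaluations. Hence $T$ is normal exactly when every $\varphi_\omega$ is normal. Then (iii)$\Leftrightarrow$(iv) is the equivalence of (3) and (2.5) in Theorem \ref{Theorem: W*}, applied to each $\varphi_\omega$.

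For (ii), the implication (i)$\Rightarrow$(ii) is trivial, since an increasing net in a subalgebra with supremum $x$ in the subalgebra has the same supremum in $M$. (Subalgebras are von Neumann subalgebras, so suprema agree.) For the converse, suppose $T$ is normal on every maximal abelian von Neumann subalgebra; this is the weakest form of the hypothesis. Every commutative von Neumann subalgebra $C$ of $M$ is contained, by Zorn's lemma, in a maximal abelian subalgebra $\mathcal{A}$. Its increasing nets and their suprema are the same in $C$, $\mathcal{A}$ and $M$, so $T$ is normal on $C$. Applying the pointwise argument of the previous paragraph inside $C$ shows that each $\varphi_\omega$ is normal on every commutative von Neumann subalgebra. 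The Corollary (Haagerup's second problem) then gives that each $\varphi_\omega$ is normal, which is (iii).

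The only point needing care is the unital clause: the Corollary was stated for all commutative von Neumann subalgebras, possibly non-unital. This is harmless, since each such subalgebra lies in one containing $1$, namely $C+\mathbb{C}(1-1_C)$, which in turn lies in a masa. No faithfulness or semi-finiteness enters anywhere, because the scalar results were proved for arbitrary weights. I expect no real obstacle; the main thing to verify carefully is the lemma's claim that suprema of increasing nets in $\widehat{N}_+$ are pointwise. This holds because a pointwise supremum of norm lower semi-continuous, additive, positively homogeneous functions is again such a function (additivity passing to the limit along an increasing net).
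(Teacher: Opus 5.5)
Your proposal is correct and follows the paper's intended route. The paper calls the result ``immediate from the lemma and commutative detection for ordinary weights'', and you do exactly that: you scalarize via the lemma on suprema in $\widehat{N}_+$, apply Theorem \ref{Theorem: W*} and the Corollary on Haagerup's second problem to each $\varphi_\omega=\omega\circ T$, and fill in the masa-containment detail that the paper leaves implicit.
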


\begin{corollary}
For a unital inclusion $N\subseteq M$, an operator-valued
weight $T\colon M_+\to\widehat{N}_+$ is normal if and only
if its restrictions to commutative von Neumann subalgebras
of $M$ preserve increasing bounded suprema.
\end{corollary}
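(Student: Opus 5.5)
The plan is to reduce the corollary directly to Theorem \ref{thm:operator-valued-commutative-determination}, specifically to the equivalence (i)$\Leftrightarrow$(ii) there. An operator-valued weight $T\colon M_+\to\widehat N_+$ is in particular additive, positively homogeneous and zero at zero, so the theorem applies to it verbatim. The bimodule property $T(b^*xb)=b^*T(x)b$ plays no role in the argument. The only thing to check is that the phrase ``its restrictions to commutative von Neumann subalgebras of $M$ preserve increasing bounded suprema'' is exactly condition (ii) of that theorem.

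For the forward direction, assume $T$ is normal. Let $C\subseteq M$ be a commutative von Neumann subalgebra and let $x_i\uparrow x$ be a bounded increasing net in $C_+$. Its supremum in $C$ coincides with its supremum in $M$: both are the strong limit, and $C$ is strongly closed. Hence $T(x)=\sup_iT(x_i)$ by normality, which means the restriction $T|_{C_+}$ preserves increasing bounded suprema. This step needs nothing beyond the definitions and the lemma describing suprema in $\widehat N_+$ through evaluations.

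For the converse, assume every restriction to a commutative von Neumann subalgebra preserves increasing bounded suprema. If some subalgebra $C$ does not contain $1_M$, pass to $C+\mathbb C(1-1_C)$ or to $W^*(C,1)$. This is harmless because the hypothesis is stated for all commutative von Neumann subalgebras, and the unital ones are among them. So (ii) of Theorem \ref{thm:operator-valued-commutative-determination} holds, and that theorem yields (i), namely that $T$ is normal.

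If one wants to unwind the theorem, the argument runs as follows. For each $\omega\in(N_*)_+$, the scalar weight $\omega\circ T$ is normal on every commutative von Neumann subalgebra. Indeed, $\omega(\sup_iT(x_i))=\sup_i\omega(T(x_i))$ by the lemma on suprema in $\widehat N_+$. The corollary on Haagerup's second problem then shows that $\omega\circ T$ is normal. Normality of $T$ follows because order and equality in $\widehat N_+$ are detected by evaluations at $\omega\in(N_*)_+$.

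The main, though mild, obstacle is the bookkeeping of suprema. One has to make sure that suprema inside a commutative subalgebra agree with suprema in $M$, and that suprema in $\widehat N_+$ are computed pointwise in $\omega$. Both facts are recorded above: the first from strong closedness, the second from the preceding lemma.
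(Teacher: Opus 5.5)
Your proof is correct and follows the paper's route. The corollary is just the equivalence (i)$\Leftrightarrow$(ii) of Theorem~\ref{thm:operator-valued-commutative-determination}, which the paper obtains exactly as you unwind it: scalarize with $\omega\in(N_*)_+$, apply the scalar Haagerup-problem corollary to each $\omega\circ T$, and use that suprema and equality in $\widehat{N}_+$ are detected by evaluations. Your extra check, that suprema in a possibly non-unital commutative von Neumann subalgebra agree with suprema in $M$, is a harmless clarification of the forward direction.
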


\bibliographystyle{alpha}
\bibliography{bib}

\end{document}